\title{Zeroth $\mathbb{A}^1$-homology of smooth proper varieties}
\author{Junnosuke Koizumi}
\address{Graduate School of Mathematical Sciences, the University of Tokyo, 3-8-1 Komaba, Meguro-ku, Tokyo 153-8914, Japan}
\email{jkoizumi@ms.u-tokyo.ac.jp}
\keywords{motivic homotopy theory}
\subjclass[2010]{Primary 14F42; Secondary 14E99}
\theoremstyle{plain}
	\newtheorem{theorem}{Theorem}[section]
	\newtheorem{lemma}[theorem]{Lemma}
	\newtheorem{corollary}[theorem]{Corollary}
\theoremstyle{definition}
\theoremstyle{remark}
\newcommand{\Set}{{\mathbf {Set}}}
\newcommand{\Ab}{{\mathbf {Ab}}}
\newcommand{\PSh}{{\mathbf{PSh}}}
\newcommand{\Sh}{{\mathbf{Sh}}}
\newcommand{\Data}{{\mathbf{Data}}}
\newcommand{\D}{{\mathbf{D}}}
\newcommand{\sSh}{{\mathbf{sSh}}}
\newcommand{\Sm}{{\mathbf {Sm}}}
\DeclareMathOperator{\Spec}{Spec}
\DeclareMathOperator{\Hom}{Hom}
\DeclareMathOperator{\Aut}{Aut}
\newcommand{\calHom}{{\mathcal{H}\mathrm{om}}}
\newcommand{\del}{\partial}
\newcommand{\br}{{br}}
\newcommand{\ur}{{ur}}
\newcommand{\pr}{{\mathrm{pr}}}
\newcommand{\id}{{\mathrm{id}}}
\newcommand{\conn}{{\mathrm{conn}}}
\newcommand{\A}{{\mathbb{A}^1}}
\newcommand{\bA}{{b\mathbb{A}^1}}
\newcommand{\GTors}{{G\text{-}\mathbf{Tors}}}
\renewcommand{\tilde}{\widetilde}
\newcommand{\Abb}{\mathbb{A}}
\newcommand{\Pbb}{\mathbb{P}}
\newcommand{\Zbb}{\mathbb{Z}}
\newcommand{\Bcal}{\mathcal{B}}
\newcommand{\Ccal}{\mathcal{C}}
\newcommand{\Dcal}{\mathcal{D}}
\newcommand{\Ecal}{\mathcal{E}}
\newcommand{\Fcal}{\mathcal{F}}
\newcommand{\Hcal}{\mathcal{H}}
\newcommand{\Ocal}{\mathcal{O}}
\newcommand{\Vcal}{\mathcal{V}}
\newcommand{\Xcal}{\mathcal{X}}
\newcommand{\Ycal}{\mathcal{Y}}
\begin{document}

\begin{abstract}
We give an explicit formula for the zeroth $\mathbb{A}^1$-homology sheaf of a smooth proper variety.
We also provide a simple proof of a theorem of Kahn-Sujatha which describes hom sets in the birational localization of the category of smooth varieties.
\end{abstract}
\maketitle
\setcounter{tocdepth}{1}
\tableofcontents

\section{Introduction}

We fix a base field $k$.
The sheaf of birational connected components $\pi_0^\br(X)$ of a smooth proper variety $X$ over $k$ was introduced by Asok-Morel \cite{AM} to study $\A$-connected components of $X$.\footnote[1]{It is written $\pi_0^\bA(X)$ in loc.cit.}
It is a Nisnevich sheaf on the category of smooth schemes over $k$ equipped with a morphism of sheaves $X\to \pi_0^\br(X)$ which induces an isomorphism $\pi_0^\br(X)(U)\cong X(k(U))/R$ for a smooth variety $U$ over $k$, where $X(k(U))/R$ is the set of naive $\A$-homotopy classes of morphisms $\Spec k(U)\to X$ \cite[Theorem 6.2.1]{AM}.

On the other hand, the $\A$-homology sheaves $H_n^\A(X)$ of a smooth variety $X$ over $k$ were introduced by Morel \cite{Mor05} as an $\A$-homotopy theoretic analogue of the homology groups of a topological space.
When $X$ is proper over $k$, the canonical morphism $X\to \pi_0^\br(X)$ induces a morphism $H_0^\A(X)\to \Zbb(\pi_0^\br(X))$ by the universal property of $H_0^\A(X)$ (see Lemma \ref{universal_property}).
Asok-Haesemeyer used this to prove that $H_0^\A(X)$ detects existence of a rational point on $X$ \cite[Corollary 2.9]{AH11}.
In this paper we refine this result:

\begin{theorem}[see Theorem \ref{main}]\label{intro1}
	For any smooth proper variety $X$ over a field $k$, the canonical morphism $X\to \Zbb(\pi_0^\br(X))$ induces an isomorphism $H_0^\A(X)\cong \Zbb(\pi_0^\br(X))$.
	In particular, $H_0^\A(X)(k)$ is a free abelian group of rank $\#X(k)/R$.
\end{theorem}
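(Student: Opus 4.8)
The plan is to identify $H_0^{\A}(X)$ with $\Zbb(\pi_0^\br(X))$ by exhibiting the latter as the universal strictly $\A$-invariant (equivalently, strictly homotopy invariant) quotient of the free Nisnevich sheaf $\Zbb(X)$. Since $H_0^{\A}(X)$ is by construction the universal strictly $\A$-invariant sheaf receiving a map from $\Zbb(X)$ (Lemma \ref{universal_property}), it suffices to show two things: first, that $\Zbb(\pi_0^\br(X))$ is itself strictly $\A$-invariant, so that the canonical map $\Zbb(X)\to\Zbb(\pi_0^\br(X))$ factors through $H_0^{\A}(X)$; and second, that the resulting map $H_0^{\A}(X)\to\Zbb(\pi_0^\br(X))$ is an isomorphism. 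The second point I would attack by checking it on stalks: for a henselian local ring $\Ocal$ with fraction field $F$, one knows $H_0^{\A}(X)(\Ocal)=H_0^{\A}(X)(F)$ and $\pi_0^\br(X)(\Ocal)=\pi_0^\br(X)(F)=X(F)/R$ by the Asok--Morel description, so everything reduces to the case of a field, where one must show $H_0^{\A}(X)(F)$ is the free abelian group on $X(F)/R$.

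For the field case I would proceed as follows. The group $H_0^{\A}(X)(F)$ is computed as the $0$-th homology of the Suslin-type complex $C_*^{\A}(\Zbb(X))$ evaluated at $F$ and then Nisnevich-sheafified and made strictly invariant; concretely, using Morel's theory, $H_0^{\A}(X)(F)$ is the coequalizer of the two maps $\Zbb(X(\A^1_F))\rightrightarrows\Zbb(X(F))$ induced by the two rational points $0,1$, at least after suitable sheafification. The key input is that two points of $X(F)$ become equal in this coequalizer precisely when they are $R$-equivalent, i.e.\ connected by a chain of images of $F$-morphisms $\A^1_F\to X$ (here properness of $X$ is used so that such morphisms are the same as morphisms from $\A^1$, and $R$-equivalence coincides with naive $\A^1$-homotopy of points $\Spec F\to X$). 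This gives a surjection $\Zbb(X(F)/R)\twoheadrightarrow H_0^{\A}(X)(F)$, and the universal map $H_0^{\A}(X)\to\Zbb(\pi_0^\br(X))$ provides a section-like inverse on generators, forcing an isomorphism. Freeness and the rank statement then follow immediately, since $\Zbb(\pi_0^\br(X))(k)=\Zbb(X(k)/R)$ is by definition free on $X(k)/R$.

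The main obstacle is controlling the strict $\A$-invariance of $\Zbb(\pi_0^\br(X))$ and, relatedly, showing that the naive coequalizer above already computes the correct (strictly invariant, sheafified) $H_0^{\A}$ without introducing extra relations or failing to be a sheaf. Morel's machinery guarantees that $H_0^{\A}$ of any sheaf is strictly $\A$-invariant and unramified, so the real content is that $\Zbb$ applied to the unramified sheaf $\pi_0^\br(X)$ stays strictly $\A$-invariant; here I expect to use that $\pi_0^\br(X)$ is a sheaf of sets whose sections on fields are $\A^1$-rigid in the appropriate sense (it kills $\A^1$, by the Asok--Morel computation $\pi_0^\br(X)(F)=X(F)/R$ and the fact that $\A^1_F$ has a rational point so $X(F(t))/R\to X(F)/R$ is split), which should propagate to the free abelian sheaf it generates. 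A secondary subtlety is the comparison between $R$-equivalence of rational points and the relation imposed by $\A^1$-chains in the homology complex; properness is what makes these match up, and I would isolate this as the crucial lemma, deducing the theorem from it together with the universal property.
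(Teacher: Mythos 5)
Your overall strategy (show $\Zbb(\pi_0^\br(X))$ is strictly $\A$-invariant, get the comparison map from Lemma \ref{universal_property}, then prove it is an isomorphism) is reasonable, but the way you propose to prove the map is an isomorphism has a genuine gap at its core. You compute $H_0^\A(X)(F)$ as the coequalizer of the two evaluation maps $\Zbb(X(\Abb^1_F))\rightrightarrows \Zbb(X(F))$, i.e.\ as the zeroth homology of a Suslin-type complex. But Morel's $\A$-localization $L_\A$ is \emph{not} computed by the naive singular construction in general, and the assertion that the naive coequalizer already equals the strictly $\A$-invariant sheaf $H_0^\A(X)$ on field sections is essentially the whole content of the theorem --- you flag this yourself as ``the main obstacle'' but do not resolve it. A second problem is the reduction to fields: for a strictly $\A$-invariant (hence unramified) sheaf $M$ and a henselian DVR $\Ocal$ with fraction field $F$, one only has an injection $M(\Ocal)\hookrightarrow M(F)$, not the equality $H_0^\A(X)(\Ocal)=H_0^\A(X)(F)$ you assert; and a map of unramified sheaves that is bijective on all $F\in\Fcal_k$ is an isomorphism only if it also respects the subsets $M(\Ocal_v)\subset M(F)$ and the specialization maps, which you never check (this is where properness and the valuative criterion must actually enter). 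Finally, your argument for strict $\A$-invariance of $\Zbb(\pi_0^\br(X))$ only addresses $\A$-invariance of sections; strictness also requires $\A$-invariance of all higher Nisnevich cohomology, which in the paper comes from the vanishing of \v{C}ech cohomology of birational sheaves (Lemma \ref{br_str_a1}), not from any rigidity of $\pi_0^\br(X)$ on fields.

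The paper avoids every one of these computations by matching universal properties instead: Theorem \ref{pi0_universal} shows that $X\to\pi_0^\br(X)$ is initial among morphisms to $\A$-invariant \emph{unramified} sheaves (the unramifiedness framework of Morel, together with the valuative criterion, is exactly what replaces your stalkwise reduction), Corollary \ref{a1_unr} (which rests on the $\A$-locality of $\Bcal G$) guarantees that every strictly $\A$-invariant sheaf of abelian groups is unramified, and Lemma \ref{universal_property} says $X\to H_0^\A(X)$ is initial among morphisms to strictly $\A$-invariant abelian sheaves; the isomorphism then falls out with no computation of sections of $H_0^\A(X)$ at all. If you want to salvage your approach, the missing lemma you would have to prove is precisely that the canonical surjection from the naive $\A^1$-chain quotient onto $H_0^\A(X)(F)$ is injective, and the only proof of that I know proceeds through the universal property anyway.
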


Under resolution of sigularities, Theorem \ref{intro1} was proved by Shimizu \cite[Theorem 4.1]{Shi}, and was also established in unpublished work of Hogadi.
We do not assume resolution of sigularities here.
The key of our proof is the following universal property of $\pi_0^\br(X)$:

\begin{theorem}[see Theorem \ref{pi0_universal}]\label{intro2}
	For any smooth proper variety $X$ over a field $k$, the canonical morphism $X\to \pi_0^\br(X)$ is initial among morphisms from $X$ to $\A$-invariant unramified sheaves over $k$.
	Moreover, if $X\to Y$ is a birational morphism between smooth proper varieties over $k$, then the induced morphism $\pi_0^\br(X)\to \pi_0^\br(Y)$ is an isomorphism.
\end{theorem}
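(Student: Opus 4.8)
The plan is to establish the universal property first, and then to deduce the birational-invariance statement from it. Throughout I take for granted the basic structural facts about $\pi_0^\br(X)$ due to Asok--Morel: it is an $\A$-invariant, unramified, indeed birational Nisnevich sheaf, one has $\pi_0^\br(X)(U)\cong X(k(U))/R$ for every smooth $U$, and on $L$-points the canonical morphism $c_X\colon X\to\pi_0^\br(X)$ is the quotient $X(L)\twoheadrightarrow X(L)/R$. We may assume $X$ irreducible. Fix an $\A$-invariant unramified sheaf $F$ and a morphism $\phi\colon X\to F$, i.e.\ an element $\phi\in F(X)$. \emph{Uniqueness} of a factorization through $c_X$ is formal: since $\pi_0^\br(X)$ and $F$ are unramified, a morphism $\pi_0^\br(X)\to F$ is determined by its components over finitely generated field extensions $L/k$ (for irreducible smooth $U$ the value $\pi_0^\br(X)(U)=\pi_0^\br(X)(k(U))$ depends only on the function field, and $F(U)\hookrightarrow F(k(U))$), and each component $\pi_0^\br(X)(L)\to F(L)$ is determined by its precomposition with the surjection $X(L)\twoheadrightarrow\pi_0^\br(X)(L)$, which is forced to equal $\phi_L$. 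For \emph{existence} one first produces the components over fields: given a naive $\A$-homotopy $h\colon\A_L\to X$, the element $\phi\circ h$ lies in $F(\A_L)=F(L)$, and both $0^\ast$ and $1^\ast$ are inverse to pullback along $\A_L\to\Spec L$, so $\phi_L(h(0))=\phi_L(h(1))$; hence $\phi_L\colon X(L)\to F(L)$ kills $R$-equivalence and factors through $\bar\phi_L\colon\pi_0^\br(X)(L)=X(L)/R\to F(L)$, clearly natural in $L$.

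It remains to upgrade $(\bar\phi_L)_L$ to a morphism of sheaves, and this is where properness of $X$ enters. Let $U$ be smooth irreducible with function field $L$ and let $v\in U^{(1)}$. By the valuative criterion of properness for the discrete valuation ring $\Ocal_{U,v}$, any point $x\colon\Spec L\to X$ extends to $\tilde x\colon\Spec\Ocal_{U,v}\to X$; then $\phi(\tilde x)\in F(\Ocal_{U,v})$ maps to $\phi_L(x)$ in $F(L)$, so $\phi_L(x)\in F(\Ocal_{U,v})$, and since $F$ is unramified and $v$ is arbitrary, $\phi_L(x)\in\bigcap_{v\in U^{(1)}}F(\Ocal_{U,v})=F(U)$. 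Thus $\bar\phi_L$ restricts to $\bar\phi_U\colon\pi_0^\br(X)(U)\to F(U)$. The $\bar\phi_U$ (extended additively to reducible $U$) then assemble to a natural transformation $\pi_0^\br(X)\to F$: for dominant morphisms of irreducible smooth schemes this is immediate from naturality of $\bar\phi_{(-)}$ in the field together with $F(U)\hookrightarrow F(k(U))$, and for the remaining morphisms one invokes Morel's equivalence between unramified sheaves and unramified data, the required compatibility with specialization being exactly the reduction maps $X(\Ocal_v)/R\to X(\kappa(v))/R$ — which again exist because $X$ is proper, and which are compatible with $\phi$ by naturality of $\phi$ applied to $\Spec\kappa(v)\to\Spec\Ocal_v\to X$. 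By construction, precomposition with $c_X$ recovers $\phi$. This assembly step, and in particular the verification of naturality against non-dominant morphisms, is the part I expect to demand the most care.

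For the second assertion, let $f\colon X\to Y$ be a birational morphism of smooth proper varieties and write $\pi_0^\br(f)\colon\pi_0^\br(X)\to\pi_0^\br(Y)$ for the induced morphism. A birational inverse of $f$ is a rational map from $Y$ to $X$; composing it with $c_X$ gives a rational map from the proper variety $Y$ to $\pi_0^\br(X)$, and since the value of $\pi_0^\br(X)$ on a smooth variety depends only on its function field, this rational map extends uniquely to a morphism $g\colon Y\to\pi_0^\br(X)$. By the universal property applied to $Y$, one has $g=\bar g\circ c_Y$ for a unique $\bar g\colon\pi_0^\br(Y)\to\pi_0^\br(X)$. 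Finally $\bar g$ and $\pi_0^\br(f)$ are mutually inverse: the composites $\bar g\circ\pi_0^\br(f)\circ c_X$ and $\pi_0^\br(f)\circ\bar g\circ c_Y$ agree, on a dense open of $X$ respectively $Y$, with $c_X$ respectively $c_Y$ — using that $f$ and its birational inverse compose to the identity rational map in either order and that $\pi_0^\br$ is functorial via pullback of rational maps — hence agree with $c_X$, $c_Y$ everywhere since $\pi_0^\br$-sheaves are separated, and therefore $\bar g\circ\pi_0^\br(f)$ and $\pi_0^\br(f)\circ\bar g$ are the respective identities by the uniqueness in the universal property.
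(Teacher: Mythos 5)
Your proof of the universal property is essentially the paper's argument: $\A$-invariance of the target forces $0^\ast=1^\ast$ on $F(\Abb^1_L)$ so that $\phi_L$ kills $R$-equivalence, the valuative criterion for the proper $X$ puts $\phi_L(x)$ into $F(\Ocal_v)$ for every relevant discrete valuation, and the passage from field components to an actual morphism of sheaves is carried out through Morel's equivalence between unramified sheaves and unramified $\Fcal_k$-data, with the specialization compatibility checked by chasing elements coming from the surjection $X(\Ocal_v)\twoheadrightarrow \pi_0^\br(X)(\Ocal_v)$. The step you flag as delicate is exactly the step the paper spends its two diagrams on, and your justification is the same one.

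Where you genuinely diverge is the second assertion. The paper observes that, by the universal property just proved (a Yoneda-type reduction), $\pi_0^\br(X)\to\pi_0^\br(Y)$ is an isomorphism as soon as $S(Y)\to S(X)$ is bijective for \emph{every} $\A$-invariant unramified sheaf $S$; it then proves this bijectivity directly inside $S(k(X))$, using the valuative criterion for $X$ to show $S(X)\subset S(\Ocal_{Y,y})$ for each $y\in Y^{(1)}$ and hence $S(X)\subset\bigcap_{y}S(\Ocal_{Y,y})=S(Y)$. You instead build an explicit inverse: extend $c_X\circ(\text{birational inverse of }f)$ from a dense open of $Y$ to a morphism $g\colon Y\to\pi_0^\br(X)$ using the birationality of the sheaf $\pi_0^\br(X)$, factor $g$ through $c_Y$ by the universal property, and verify the two composites equal the identity by checking them after precomposition with $c_X$, $c_Y$ on dense opens and invoking separatedness plus uniqueness. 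Both arguments are correct. Yours leans on the birational-sheaf property of $\pi_0^\br(X)$ itself (to extend the rational map) and is more concrete; the paper's version never mentions rational maps, and as a by-product establishes the stronger statement that $S(Y)\to S(X)$ is bijective for an arbitrary $\A$-invariant unramified sheaf $S$, which is the form in which the birational invariance is most readily reused.
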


Section 1 contains a proof of this result.
From Theorem \ref{intro2} we also see that the unstable analogue $\pi_0^\A(X)\cong \pi_0^\br(X)$ of Theorem \ref{intro1} does not hold in general; there is an example of a birational morphism $X\to Y$ between smooth proper varieties such that $\pi_0^\A(X)\to \pi_0^\A(Y)$ is not an isomorphism (see \cite[Example 4.8]{BHS}).
We also need the following result:

\begin{theorem}[see Theorem \ref{bga1local}]
	Let $G$ be a strongly $\A$-invariant Nisnevich sheaf of groups on $\Sm_k$ (i.e. $G$ and $H^1(-,G)$ are $\A$-invariant).
	Then its classifying space $\Bcal G$ is $\A$-local.
\end{theorem}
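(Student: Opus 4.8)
The plan is to identify $\Bcal G$ with the classifying stack of $G$-torsors and then read off $\A$-locality from the two $\A$-invariance hypotheses. Let $Z$ be a Nisnevich-local fibrant replacement of $\Bcal G$. As being $\A$-local is invariant under Nisnevich-local weak equivalences, it suffices to show $Z$ is $\A$-local; and since the $\A$-local model structure is the left Bousfield localization of the Nisnevich-local one at the projections $U\times\A\to U$ ($U\in\Sm_k$), this means: for every $U\in\Sm_k$ the map $p^*\colon Z(U)\to Z(U\times\A)$ induced by the projection $p$ is a weak equivalence. It is standard that $\Bcal G$ classifies Nisnevich-locally trivial $G$-torsors, so that for each $U$ there is a natural equivalence between $Z(U)$ and the nerve of the groupoid $\mathrm{Tors}_G(U)$ of such torsors; in particular $Z(U)$ is $1$-truncated with $\pi_0Z(U)=H^1_\Nis(U,G)$ and $\pi_1(Z(U),[P])=\Aut_G(P)$. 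A map between $1$-truncated spaces is a weak equivalence precisely when it induces an equivalence of fundamental groupoids, so the whole statement reduces to proving that the pullback functor $p^*\colon\mathrm{Tors}_G(U)\to\mathrm{Tors}_G(U\times\A)$ is an equivalence of categories for every $U$.

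Two of the three conditions for an equivalence come for free. The zero section $i_0\colon U\to U\times\A$ satisfies $p\circ i_0=\id_U$, hence $i_0^*\circ p^*=\id$ on $\mathrm{Tors}_G(U)$, so $p^*$ is faithful and injective on isomorphism classes. Essential surjectivity of $p^*$ is exactly the surjectivity of $H^1_\Nis(U,G)\to H^1_\Nis(U\times\A,G)$, which is part of the hypothesis that $H^1(-,G)$ is $\A$-invariant. It remains to show $p^*$ is full. For $G$-torsors $P,Q$ on $U$ the set $\Hom_G(P,Q)$ is non-empty if and only if $\Hom_G(p^*P,p^*Q)$ is (apply $p^*$ one way and $i_0^*$ the other), and when non-empty both are torsors, under $\Aut_G(Q)$ and $\Aut_G(p^*Q)$ respectively, compatibly with $p^*$; so fullness reduces to showing that $\Aut_G(Q)\to\Aut_G(p^*Q)$ is bijective for every $Q$. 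Injectivity here is again provided by $i_0^*$, leaving only surjectivity.

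This surjectivity is the crux. The inner twist $\underline{\Aut}_G(Q)$ is a Nisnevich sheaf of groups on $\Sm_U$ with $\Aut_G(Q)=\underline{\Aut}_G(Q)(U)$ and $\Aut_G(p^*Q)=\underline{\Aut}_G(Q)(U\times\A)$, where $U\times\A$ is viewed as a $U$-scheme via $p$ and the comparison map is restriction along $p$; so we must show that $\underline{\Aut}_G(Q)$ has $\A$-invariant sections. Choose a Nisnevich cover $\{U_\alpha\to U\}$ trivializing $Q$; over each $U_\alpha$ and each overlap $U_{\alpha\beta}=U_\alpha\times_U U_\beta$ the sheaf $\underline{\Aut}_G(Q)$ becomes isomorphic to the restriction of $G$, which has $\A$-invariant sections since $G$ does. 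Pulling this cover back along $U\times\A\to U$ gives the analogous cover of $U\times\A$, and comparing the equalizer presentations supplied by the sheaf condition for $\underline{\Aut}_G(Q)(U)$ and for $\underline{\Aut}_G(Q)(U\times\A)$ shows that the $\A$-invariance isomorphisms of $G$ match them up term by term — the transition cocycles $g_{\alpha\beta}\in G(U_{\alpha\beta})$ of $Q$ going to their pullbacks — so restriction is a bijection $\underline{\Aut}_G(Q)(U)\to\underline{\Aut}_G(Q)(U\times\A)$. I expect this last verification, that twisted Nisnevich descent is compatible with $\A$-invariance, to be the only genuine work; together with the $H^1(-,G)$-invariance invoked above it is where the hypotheses actually get used, the rest being formal. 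Assembling the three points, $p^*$ is an equivalence of categories for every $U$, hence $\Bcal G$ is $\A$-local.
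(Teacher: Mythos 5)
Your proof is correct and follows essentially the same route as the paper's proof of Theorem \ref{bga1local}: reduce $\A$-locality to the sectionwise statement that $\Bcal G(U)\to\Bcal G(\Abb^1_U)$ is a weak equivalence, identify $\Bcal G(U)$ up to homotopy with the nerve of the groupoid of $G$-torsors on $U$, dispose of $\pi_0$ using the $\A$-invariance of $H^1(-,G)$, and handle $\pi_1$ by trivializing the torsor on a Nisnevich cover and describing its automorphisms as cocycle-compatible sections of $G$, at which point the $\A$-invariance of $G$ finishes the argument (your equalizer comparison is exactly the paper's identification of $\Aut_{\GTors_U}(p^*\Ecal G)$ with $\{g\in G(V)\mid h(\pr_1^*g)=(\pr_2^*g)h\}$). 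The one place you diverge is that you invoke as ``standard'' the equivalence $Z(U)\simeq\mathrm{N}(\GTors_U)$, including the $1$-truncatedness and the computation of $\pi_1$ as automorphisms of the corresponding torsor; this is precisely the ``nontrivial computation of the homotopy groups of $\Bcal G$'' that the paper identifies as the reason Morel's unproved assertion needs an argument, and which it establishes in Lemma \ref{truncated} through Corollary \ref{MV1.16}. Since this identification is genuinely folklore (the paper itself says so and points to Jardine for a version in a different formulation), your appeal to it is not a mathematical error, but in the context of this paper it is the main content rather than a citable black box, so a complete write-up should either prove it or give a precise reference that covers the $\pi_1$ statement and not only the bijection $\pi_0\Bcal G(U)\cong H^1(U,G)$ of [MV99, \S4, Prop.\ 1.16].
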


This theorem is stated in \cite{Mor} without proof, but requires a nontrivial computation of the homotopy groups of $\Bcal G$.
This is done in Section 2.
Note that we only use the case where $G$ is abelian and one can give a shorter proof in that case, but we record the general statement for future use.

Theorem \ref{intro2} has another application.
Let $S_b$ denote the class of birational morphisms in the category $\Sm_k^\conn$ of smooth varieties over a field $k$.
Using Theorem \ref{intro2}, we can provide a simple proof of the following result of Kahn-Sujatha {\cite[Theorem 6.6.3]{KS15}}:

\begin{theorem}[see Theorem \ref{ks15main}]
	Let $k$ be a field, $X,U\in \Sm_k^\conn$ and suppose that $X$ is proper.
	Then there is a bijection
	$$
		\Hom_{S_b^{-1}\Sm_k^\conn}(U,X)\cong X(k(U))/R
	$$
	which is compatible with the canonical maps from $X(U)$ to both sides.
\end{theorem}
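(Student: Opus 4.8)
The plan is to identify $\Hom_{S_b^{-1}\Sm_k^\conn}(U,X)$ with $\pi_0^\br(X)(U)$ and then invoke the Asok--Morel formula $\pi_0^\br(X)(U)\cong X(k(U))/R$ from \cite{AM}. Write $\Ccal:=S_b^{-1}\Sm_k^\conn$ and let $Q\colon\Sm_k^\conn\to\Ccal$ be the localization functor. The first point is that $\pi_0^\br(X)$, regarded as a presheaf of sets on $\Sm_k^\conn$, inverts $S_b$: by \cite{AM} its value on a smooth connected $V$ is canonically $X(k(V))/R$, hence depends only on the function field, so a birational morphism $V'\to V$ --- inducing an isomorphism $k(V)\xrightarrow{\sim}k(V')$ --- induces a bijection on $\pi_0^\br(X)(-)$. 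Therefore $\pi_0^\br(X)|_{\Sm_k^\conn}$ descends to a presheaf $\Fcal$ on $\Ccal$ with $\Fcal(QV)=X(k(V))/R$.

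Next I would build the comparison map. The canonical morphism $c_X\colon X\to\pi_0^\br(X)$ sends $\id_X$ to the class $[\eta_X]$ of the generic point in $\pi_0^\br(X)(X)=\Fcal(QX)$; by the Yoneda lemma in $\Ccal$ this element determines a natural transformation $\Theta\colon\Hom_\Ccal(-,QX)\Rightarrow\Fcal$ with $\Theta_U(\phi)=\Fcal(\phi)([\eta_X])$. Naturality of $c_X$ shows that for an honest morphism $\phi\colon U\to X$ the element $\Theta_U(Q\phi)\in X(k(U))/R$ is the $R$-class of the generic fibre of $\phi$, i.e.\ the image of $\phi$ under $c_X$; so $\Theta_U$ is compatible with the canonical maps out of $X(U)$. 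It remains to show that $\Theta_U\colon\Hom_\Ccal(U,X)\to X(k(U))/R$ is a bijection for all smooth connected $U$.

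Surjectivity is soft. Given a class represented by $\eta\colon\Spec k(U)\to X$, properness of $X$ together with $\Spec k(U)=\varprojlim_V V$ over the dense open subschemes $V\subseteq U$ lets us spread $\eta$ out to $g\colon V\to X$ over some dense open immersion $j\colon V\hookrightarrow U$. As $j\in S_b$ we may form $\phi:=Q(g)\circ Q(j)^{-1}\in\Hom_\Ccal(U,X)$, and unwinding $\Theta_U(\phi)=\Fcal(Qj)^{-1}\Fcal(Qg)([\eta_X])$ --- using that $\Fcal(Qj)$ is the identity of $X(k(U))/R=X(k(V))/R$ and that $\Fcal(Qg)([\eta_X])$ is the generic fibre of $g$, which is $\eta$ --- gives $\Theta_U(\phi)=[\eta]$.

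Injectivity is the crux, and the step I expect to be the main obstacle. Note that $\Hom_\Ccal(-,X)$, as a presheaf on $\Sm_k^\conn$, is the initial $S_b$-inverting presheaf receiving a map from $\Hom_{\Sm_k^\conn}(-,X)$, and $\Fcal$ is one such; so $\Theta$ factors through the canonical comparison $\Hom_\Ccal(-,X)\to\pi_0^\br(X)$, and by Theorem \ref{pi0_universal} this comparison is an isomorphism the moment we know that $\Hom_\Ccal(-,X)$ is (the restriction of) an $\A$-invariant unramified Nisnevich sheaf. To verify that I would first reduce a general morphism $U\to X$ in $\Ccal$ to a single right roof $Q(g)\circ Q(j)^{-1}$ with $j$ a dense open immersion, straightening a zigzag from the $X$-end inward and using that a rational map into the \emph{proper} variety $X$ is defined on a dense open; this "calculus of right fractions into proper targets" fails for non-proper targets and requires care with non-dominant morphisms, and is the technical heart. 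With the roof description in hand, the sheaf and unramified axioms reduce to gluing rational maps and separatedness of $X$, while $\A$-invariance reduces to showing that the two sections of $\Pbb^1_V\to V$ become equal in $\Ccal$ after composition with any morphism to $X$ --- equivalently, a manifestation of the facts that $R$-equivalence of $k(U)$-points is witnessed by morphisms from $\Pbb^1$ and that $X(L(t))/R=X(L)/R$ for proper $X$. Establishing this $\A$-invariance cleanly --- either by an explicit zigzag exploiting the birational flexibility of $\Pbb^1$ and $\Abb^1$, or via the universal property of $\pi_0^\br(X)$ --- is the heart of the argument; everything else is formal manipulation with the localization together with routine spreading-out.
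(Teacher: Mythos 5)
Your overall strategy --- identify $\Hom_{S_b^{-1}\Sm_k^\conn}(U,X)$ with $\pi_0^\br(X)(U)$ and quote Asok--Morel --- is the right one, and your construction of $\Theta$ and your surjectivity argument are fine. But the proof as written has a genuine gap: injectivity, which you yourself call ``the crux'', is never established. You reduce it to showing that $\Hom_{\Ccal}(-,X)$ is (the restriction of) an $\A$-invariant unramified Nisnevich sheaf, and then propose to verify this via a calculus of right fractions for morphisms into proper targets, gluing of rational maps for the sheaf and unramified axioms, and a separate argument for $\A$-invariance --- all of which you flag as ``the technical heart'' and leave unproved. That programme is essentially the hard content of Kahn--Sujatha's original proof, and carrying it out is far from routine: controlling arbitrary zigzags in $S_b^{-1}\Sm_k^\conn$ without an established calculus of fractions is exactly the difficulty this paper is designed to avoid.

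The missing idea that makes all of this unnecessary is the (formal) equivalence between the category of birational sheaves on $\Sm_k$ and $\PSh(S_b^{-1}\Sm_k^\conn)$: a presheaf on the localized category is the same as an $S_b$-inverting presheaf on $\Sm_k^\conn$, and extending such a presheaf by products over connected components yields a birational sheaf, because dense open immersions lie in $S_b$. Under this equivalence the representable presheaf $h_X=\Hom_{\Ccal}(-,X)$ corresponds to a birational sheaf $\tilde{h}_X$ with $\tilde{h}_X(U)=\Hom_{\Ccal}(U,X)$, and by the Yoneda lemma $X\to\tilde{h}_X$ is initial among morphisms from $X$ to birational sheaves (Lemma \ref{universal2}). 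Since every birational sheaf is automatically a Nisnevich sheaf, is unramified by definition, and is $\A$-invariant by Lemma \ref{colliot-thelene}, Theorem \ref{pi0_universal} shows that $X\to\pi_0^\br(X)$ is \emph{also} initial among morphisms to birational sheaves; hence $\tilde{h}_X\cong\pi_0^\br(X)$ compatibly with the maps from $X$, and evaluating at $U$ finishes the proof. In short, you do not need to verify any sheaf-theoretic properties of $\Hom_{\Ccal}(-,X)$ by hand --- they come for free once you recognize it as (corresponding to) a birational sheaf --- and supplying that recognition is precisely what your proposal is missing.
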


\subsection*{Notations}
For a scheme $S$, a \emph{smooth scheme} over $S$ is always assumed to be of finite type and separated over $S$.
A \emph{smooth variety} over $S$ is a connected smooth scheme over $S$.
Let $\Sm_S$ (resp. $\Sm_S^\conn$) denote the category of smooth schemes (resp. smooth varieties) over $S$.

The word ``sheaf'' always means sheaf of sets in Nisnevich topology.
Let $\Sh(\Sm_S)$ (resp. $\Ab(\Sm_S)$) denote the cateogory of sheaves (resp. sheaves of abelian groups) on $\Sm_S$.
For $X\in \Sm_S$ and $M\in \Ab(\Sm_S)$, $H^n(X,M)$ denotes the $n$-th Nisnevich cohomology group.
For $X\in \Sm_k$ and $n\geq 0$, we write $X^{(n)}$ for the set of points of codimension $n$ on $X$.

\subsection*{Acknowledgements}
I am grateful to Shuji Saito for supporting me to learn $\Abb^1$-homotopy theory.
I would like to thank Bruno Kahn for pointing out the necessity of a restriction on DVRs.
I would like to thank Aravind Asok for helpful comments on the $\Abb^1$-locality of classifying spaces.
I would also like to thank Anand Sawant and Amit Hogadi for encouraging me to write up the paper.
Finally, I would like to thank the referee for many helpful comments.

\section{Birational sheaves and unramified sheaves}

First we recall the notion of birational sheaves defined in \cite[Definition 6.1.1]{AM}.
A presheaf $S$ on $\Sm_k$ is called a \textit{birational sheaf} if for any $X\in \Sm_k$ the restriction map $S(X)\to \prod_{\eta\in X^{(0)}}S(\eta)$ is a bijection; any birational sheaf is automatically a sheaf \cite[Lemma 6.1.2]{AM}.

\begin{lemma}\label{colliot-thelene}
	Let $S$ be a birational sheaf.
	Then $S$ is $\A$-invariant; i.e. the canonical map $S(X)\to S(\Abb^1_X)$ is bijective for all $X\in \Sm_k$.
\end{lemma}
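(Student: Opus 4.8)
The plan is to reduce the statement to the behavior of $S$ on points, using the defining property of a birational sheaf. Fix $X\in\Sm_k$. We want to show that the projection $p\colon\Abb^1_X\to X$ induces a bijection $p^*\colon S(X)\to S(\Abb^1_X)$. Since $S$ is a birational sheaf, both sides are determined by the values of $S$ at the generic points of the respective schemes: $S(X)\cong\prod_{\eta\in X^{(0)}}S(\eta)$ and $S(\Abb^1_X)\cong\prod_{\xi\in(\Abb^1_X)^{(0)}}S(\xi)$. Because $X$ is smooth, hence a disjoint union of its irreducible (= connected) components, and $\Abb^1_X$ is the disjoint union of the $\Abb^1$ over those components, it suffices to treat the case where $X$ is irreducible with generic point $\eta=\Spec K$, $K=k(X)$; then $\Abb^1_X$ is irreducible with generic point the generic point of $\Abb^1_K$, which is $\Spec K(t)$. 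So the claim becomes: the map $S(\Spec K)\to S(\Spec K(t))$ induced by the inclusion $K\hookrightarrow K(t)$ is a bijection.

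Next I would prove this last assertion directly. For injectivity, note that $\Spec K(t)\to\Spec K$ factors through the generic point of $\Abb^1_K$, and one can choose a $K$-rational point of $\Abb^1_K$, say the origin $0\colon\Spec K\to\Abb^1_K$; composing $\Spec K\xrightarrow{0}\Abb^1_K$ with the structure map recovers $\id_{\Spec K}$, and the generic point $\Spec K(t)\to\Abb^1_K$ together with the birational-sheaf identification $S(\Abb^1_K)\xrightarrow{\sim}S(\Spec K(t))$ shows that $S(\Spec K)\to S(\Abb^1_K)$ is already injective (it has a retraction via $0^*$), so $S(\Spec K)\to S(\Spec K(t))$ is injective. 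For surjectivity, take $s\in S(\Spec K(t))$. Using the birational-sheaf property for $\Abb^1_K$ (whose only point of codimension $0$ is its generic point), $s$ extends uniquely to an element $\tilde s\in S(\Abb^1_K)$. Now restrict $\tilde s$ along $0\colon\Spec K\to\Abb^1_K$ to get $0^*\tilde s\in S(\Spec K)$; I claim its image in $S(\Spec K(t))$ equals $s$. To see this, spread $\tilde s$ out: the key point is that $\tilde s$, viewed in $S(\Abb^1_K)$, has the same generic value whether we specialize at $0$ or pass to the generic point, because both $0$ and the generic point lie on the irreducible scheme $\Abb^1_K$ and $S$ being birational means sections are detected on any dense open — more precisely, restricting $\tilde s$ to the complement of $0$ and to a neighborhood of $0$ and comparing on the overlap forces $0^*\tilde s$ and $s$ to agree after pulling back to $K(t)$. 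This is essentially the classical argument of Colliot-Thélène (whence the lemma's label): an unramified element over a field is constant along $\Abb^1$.

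The main obstacle is the surjectivity step, specifically making rigorous the claim that the value $0^*\tilde s$ recovers $s$ after passing to the generic point. The subtlety is that $\Spec K$ (embedded at $0$) and $\Spec K(t)$ are not comparable as points of $\Abb^1_K$, so one cannot directly "specialize" $s$ to $0$; instead one must genuinely use the sheaf property over a two-element Nisnevich (or even Zariski) cover of $\Abb^1_K$, or equivalently argue via a henselian local ring at a closed point. I expect the cleanest route is: let $v$ be the valuation on $K(t)$ associated to the closed point $0$ of $\Abb^1_K$, with residue field $K$; since $S$ is a birational sheaf it satisfies the valuative criterion that $s\in S(\Spec K(t))$ comes from $S(\Spec\Ocal_{v})$ (as $\Spec\Ocal_v\to\Abb^1_K$ is a map whose generic point is the generic point of $\Abb^1_K$, and birational sheaves glue), and then $0^*\tilde s$ is by definition the image of this section under reduction $\Ocal_v\to K$. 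Comparing this with the constant section obtained by pulling back an arbitrary element of $S(\Spec K)$ along $\Abb^1_K\to\Spec K$ and using that two sections of a birational sheaf on the irreducible scheme $\Abb^1_K$ agreeing at the generic point are equal, one concludes. The whole proof is short once this valuative input is set up, and it uses only that $S$ is a birational sheaf — no $\A$-invariance is assumed on any input, consistent with the statement.
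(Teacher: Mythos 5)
Your reduction to the statement that $S(K)\to S(K(t))$ is bijective, and your injectivity argument via the retraction $0^*$, are both fine. The problem is the surjectivity step, and it is not a presentational gap but a circularity. Given $s\in S(K(t))\cong S(\Abb^1_K)$ with corresponding $\tilde s\in S(\Abb^1_K)$, your claim is that $p^*0^*\tilde s$ and $\tilde s$ have the same image in $S(K(t))$ (where $p\colon\Abb^1_K\to\Spec K$ is the projection). By birationality, checking equality at the generic point is the same as checking $p^*0^*\tilde s=\tilde s$ in $S(\Abb^1_K)$; but ``$p^*0^*=\id$ on $S(\Abb^1_K)$'' is precisely the surjectivity (indeed the bijectivity) you are trying to prove. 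Your proposed valuative repair does not exist for a birational sheaf: there are no specialization maps $S(\Ocal_v)\to S(k(v))$ in this setting. If you define $S(\Ocal_v)$ as the colimit of $S(U)$ over open neighborhoods $U$ of $0$ in $\Abb^1_K$, then by birationality every such $U$ is dense and $S(\Ocal_v)=S(K(t))$, so ``$s$ extends to $\Ocal_v$'' is vacuous, and the only candidate for ``reduction at $0$'' is $0^*$ applied to a section over some actual $U$ --- which returns you to the unproved identity above. Specialization maps are part of the structure of an \emph{unramified} $\Fcal_k$-datum, not of a birational sheaf.

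The paper's proof gets around exactly this obstruction with Colliot-Th\'el\`ene's geometric trick, and it is worth seeing why that trick is not optional. One passes to $q\colon\Pbb^1_X\to X$, lets $W$ be the blow-up of $\Pbb^1_X\times_X\Pbb^1_X$ at $(\infty,\infty)$, and uses the morphism $\pi\colon W\to\Pbb^2_X$ that contracts the strict transform $\tilde i\colon\Pbb^1_X\to W$ of $\Pbb^1_X\times\{\infty\}$ to the point $(0{:}1{:}0)$. Then $\pi\circ\tilde i$ factors through $X$, the map $\tilde i^*$ is surjective (since $i$ has a retraction), and $\pi^*$ is bijective because $\pi$ is a birational morphism between smooth schemes and $S$ only sees generic points; chasing the diagram gives surjectivity of $q^*$. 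The essential input is the functoriality of $S$ along \emph{non-dominant} morphisms (the inclusion of the contracted curve into $W$), which constrains a birational sheaf far beyond its values at generic points. Your argument only ever uses $S$ at generic points and at the single closed point $0$, and no amount of Zariski or Nisnevich gluing on $\Abb^1_K$ will link the two, because every nonempty open of $\Abb^1_K$ already computes $S(K(t))$. To fix the proof you would need to import an argument of this blow-up type (or an equivalent one), at which point you have reproduced the paper's proof.
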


\begin{proof}
	This is \cite[Appendix A]{KS15} but we spell out the proof for the convinience of readers.
	Let $X\in \Sm_k$ and denote by $q\colon \Pbb^1_X\to X$ the canonical projection.
	It suffices to show that $q^*\colon S(X)\to S(\Pbb^1_X)$ is bijective.
	The injectivity can be seen by taking a section of $q$.
	Let us prove the surjectivity.
	Let $W$ be the blow-up of $\Pbb^1_X\times_X\Pbb^1_X$ along $(\infty,\infty)\cong X$.
	Let $i\colon \Pbb^1_X\to \Pbb^1_X\times_X \Pbb^1_X$ be the morphism $x\mapsto (x,\infty)$, and $\tilde{i}\colon \Pbb^1_X\to W$ the unique lift of $i$.
	Consider the commutative diagram
	$$
	\xymatrix{
		S(\Pbb^1_X\times_X\Pbb^1_X)\ar[r]^-{i^*}\ar[d]		&S(\Pbb^1_X).\\
		S(W)\ar[ur]_-{\tilde{i}^*}
	}
	$$
	Since $i$ has a retraction, $i^*$ is surjective and hence so is $\tilde{i}^*$.
	Now let $\pi\colon W\to \Pbb^2_X$ be the unique morphism extending the open immersion $\Abb^1_X\times_X \Abb^1_X\to \Pbb^2_X; (x,y)\mapsto (x:y:1)$.
	It fits in the following commutative diagram in $\Sm_k$:
	$$
	\xymatrix{
		\Pbb^1_X\ar[r]^-{\tilde{i}}\ar[d]^-{q}		&W\ar[d]^-{\pi}\\
		X\ar[r]^-{(0:1:0)}					&\Pbb^2_X.
	}
	$$
	Since $\pi^*$ is bijective by assumption and $\tilde{i}^*$ is surjective, we get the surjectivity of $q^*$.
\end{proof}

Let $\Fcal_k$ denote the category of finitely generated separable field extensions of $k$.
Let $X$ be a smooth proper variety over $k$ and $F\in \Fcal_k$.
We say that two $F$-points $b,b'\in X(F)$ are \textit{naively $\A$-homotopic} or \textit{$R$-equivalent} if there is a collection $\{\gamma_i\colon \Abb^1_F\to X\}_{i=1}^N$ of morphisms such that $\gamma_1(0)=b$, $\gamma_{i}(1)=\gamma_{i+1}(0)$ and $\gamma_N(1)=b'$.
Let $X(F)/R$ denote the set of $R$-equivalence classes of $F$-points.

\begin{lemma}
	For any smooth proper variety $X$ over $k$, there is a birational sheaf $\pi_0^\br(X)$ together with a morphism $X\to \pi_0^\br(X)$ such that for any smooth variety $U$ there is a bijection $\pi_0^\br(X)(U)\cong X(k(U))/R$ which is compatible with the canonical maps from $X(U)$ to both sides.
\end{lemma}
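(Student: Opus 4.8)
The plan is to construct $\pi_0^\br(X)$ directly as a birational sheaf whose value on a smooth variety $U$ is $X(k(U))/R$, so that the identification of its sections becomes a matter of unwinding the construction. For $F\in\Fcal_k$ put $\pi_0^\br(X)(F):=X(F)/R$; base change of $\A$-chains along a field extension $F\hookrightarrow F'$ in $\Fcal_k$ is compatible with $R$-equivalence, so this is functorial. For an arbitrary $U\in\Sm_k$ set $\pi_0^\br(X)(U):=\prod_{\eta\in U^{(0)}}X(k(\eta))/R$. To promote this to a presheaf on $\Sm_k$ one needs, for each morphism $f\colon V\to U$, a restriction map; decomposing $V$ into its components reduces us to $V$ irreducible with generic point $\xi$, and then $x:=f(\xi)$ lies in a unique component of $U$ with generic point $\eta$, so $f$ induces a local homomorphism $\Ocal_{U,x}\to\Ocal_{V,\xi}=k(\xi)$ with $\mathrm{Frac}(\Ocal_{U,x})=k(\eta)$. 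Thus the essential point is to produce, for every local ring $A$ of a smooth $k$-variety with fraction field $F$ and every homomorphism $A\to L$ into a field, a natural map $X(F)/R\to X(L)/R$; writing $\mathfrak p:=\ker(A\to L)$ this further reduces, by first localizing $A$ at $\mathfrak p$ and then extending scalars along $\kappa(\mathfrak p)\hookrightarrow L$, to the case where $L=k(u)$ is the residue field at the closed point $u$ of $A$.

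The construction of this specialization map $X(F)/R\to X(k(u))/R$ is where properness of $X$ is used, and it is the main obstacle. When $A$ is a discrete valuation ring the map $X(A)\to X(F)$ is injective because $X$ is separated and bijective because $X$ is proper, so the special fibre of the unique extension of a given $b\in X(F)$ furnishes a well-defined point of $X(k(u))$, and hence a class in $X(k(u))/R$. In general one extends $b$ over a dense open $W\subseteq\Spec A$ (necessarily containing all points of codimension $\le 1$, by the valuative criterion), forms the closure $\bar\Gamma\subseteq\Spec A\times_k X$ of the graph, which is proper and birational over $\Spec A$, and one must show that the class of $b$ in $X(F)/R$ determines a well-defined class in $X(k(u))/R$, independently of $W$ and of the choices made in the non-empty fibre $\bar\Gamma_u$. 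This independence, together with the compatibility of the resulting maps with composition of morphisms, is the real content; it must be handled with some care, since over an imperfect base field the residue fields of non-closed points of $U$ need not be separable over $k$, so one cannot reduce to regular systems of parameters or to smooth curves through $u$ inside $\Spec A$. (Alternatively one can follow Asok--Morel \cite[Theorem 6.2.1]{AM} and first show that the presheaf $U\mapsto X(U)/R$ has bijective restriction maps along dense open immersions and along passage to local rings, and then Nisnevich-sheafify.)

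Granting the restriction maps and their functoriality, $U\mapsto\prod_{\eta\in U^{(0)}}X(k(\eta))/R$ is a presheaf on $\Sm_k$ for which $\pi_0^\br(X)(U)\to\prod_{\eta\in U^{(0)}}\pi_0^\br(X)(\eta)$ is a bijection by construction, hence a birational sheaf by \cite[Lemma 6.1.2]{AM}; we denote it $\pi_0^\br(X)$. The canonical morphism $X\to\pi_0^\br(X)$ sends a section $\varphi\colon U\to X$ to the tuple of $R$-equivalence classes of its restrictions $\varphi|_\eta$ to the generic points of $U$, and this is natural in $U$. For a smooth variety $U$ the asserted bijection $\pi_0^\br(X)(U)\cong X(k(U))/R$ is then the definition of $\pi_0^\br(X)(U)$, and under it the composite $X(U)\to\pi_0^\br(X)(U)\cong X(k(U))/R$ is the map sending $\varphi$ to the class of $\varphi|_{k(U)}$, which is the canonical one. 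This yields all the assertions.
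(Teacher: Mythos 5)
Your proposal does not close the statement; it reconstructs the skeleton of the Asok--Morel construction and then explicitly leaves the decisive step unproven. The paper itself gives no independent argument here: its ``proof'' is the citation \cite[Theorem 6.2.1]{AM}, which is exactly where your parenthetical ``alternatively one can follow Asok--Morel'' points. So what you have written is a (useful) account of why the lemma is nontrivial, not a proof of it.

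Two concrete gaps. First, even in the discrete valuation ring case you assert that the special fibre of the unique extension of $b\in X(F)$ ``furnishes a well-defined\dots class in $X(k(u))/R$.'' Well-definedness on points is indeed immediate from the valuative criterion, but well-definedness on $R$-equivalence classes is not: you must show that if $b$ and $b'$ are joined by a chain of maps $\gamma_i\colon \Abb^1_F\to X$, then their specializations are $R$-equivalent over $k(v)$. The $\gamma_i$ only extend over the complement of a codimension-$2$ locus in $\Abb^1_{\Ocal_v}$, which may meet the special fibre at the very points $0,1$ you need; resolving this (specialization of $R$-equivalence for proper schemes over a DVR, due to Koll\'ar and Madore) is a genuine theorem and is nowhere addressed in your write-up. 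Second, for local rings of dimension $\geq 2$ you candidly state that the independence of the class of the choices made in $\bar\Gamma_u$, and the compatibility with composition, ``is the real content'' and ``must be handled with some care'' --- and then you do not handle it. A proof that ends by naming its own missing step is a gap, not a proof. If you want a self-contained argument you must either supply these two specialization results or restructure the construction (as Asok--Morel do) so that only the DVR case is needed, together with the cited specialization theorem.
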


\begin{proof}
	See \cite[Theorem 6.2.1]{AM}.
\end{proof}

Next we recall from \cite{Mor} the notion of unramified sheaves.
A sheaf $S$ on $\Sm_k$ is called an \textit{unramified sheaf} if the following conditions hold:
\begin{enumerate}
	\item For any $X\in \Sm_k^\conn$ the restriction map $S(X)\to S(k(X))$ is injective.
	\item For any $X\in \Sm_k^\conn$ we have $S(X)=\bigcap_{x\in X^{(1)}}S(\Ocal_{X,x})$ as subsets of $S(k(X))$.
\end{enumerate}
Let $\Sh^\ur(\Sm_k)$ denote the full subcategory of $\Sh(\Sm_k)$ consisting of unramified sheaves.

Let $\Vcal_k$ denote the class of all pairs $(F,v)$ where $F\in \Fcal_k$ and $v$ is a discrete valuation on $F$ such that there are some $X\in \Sm_k$ and $x\in X^{(1)}$ such that $\Ocal_v \cong\Ocal_{X,x}$.
An \textit{unramified $\Fcal_k$-datum} is a functor $S\colon \Fcal_k\to \Set$ together with a subset $S(\Ocal_v)\subset S(F)$ and a specialization map $s_v\colon S(\Ocal_v)\to S(k(v))$ for each $(F,v)\in \Vcal_k$ satisfying some compatibility conditions (see \cite[Definition 2.9]{Mor} for details).
A morphism of $\Fcal_k$-data $f\colon S\to S'$ is a natural transformation satisfying $f_F(S(\Ocal_v))\subset S'(\Ocal_v)$ for every $(F,v)\in \Vcal_k$ and compatible with specialization maps.
Let $\Data^\ur$ denote the category of unramified $\Fcal_k$-data.

\begin{lemma}\label{unramified_datum}
	The restriction functor $\Sh^\ur(\Sm_k)\to \Data^\ur$ is an equivalence of categories.
\end{lemma}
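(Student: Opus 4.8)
The plan is to construct a quasi-inverse to the restriction functor $\Sh^\ur(\Sm_k)\to \Data^\ur$. Given an unramified $\Fcal_k$-datum $S$, I would define a presheaf $\tilde S$ on $\Sm_k$ by the formula dictated by the unramified axioms themselves: for $X\in\Sm_k^\conn$ set
$$
\tilde S(X) = \bigcap_{x\in X^{(1)}} S(\Ocal_{X,x}) \subset S(k(X)),
$$
where the intersection is taken inside $S(k(X))$ using the inclusions $S(\Ocal_v)\subset S(F)$ that are part of the datum, and then extend to all of $\Sm_k$ by declaring $\tilde S(\coprod_i X_i) = \prod_i \tilde S(X_i)$. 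The first task is to check this really defines a presheaf, i.e. to produce restriction maps: for a dominant morphism $f\colon Y\to X$ of smooth varieties one uses the field extension $k(X)\to k(Y)$ and must verify that an element of $\bigcap_x S(\Ocal_{X,x})$ lands in $\bigcap_y S(\Ocal_{Y,y})$; this is where the compatibility axioms of an unramified datum (stability of the subsets $S(\Ocal_v)$ under the maps induced by extensions of valued fields, and the analogous behaviour for the case where a codimension-one point of $Y$ lies over a codimension-one point of $X$ or over the generic point) are used. For non-dominant morphisms one factors through the image.

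Next I would verify that $\tilde S$ is a Nisnevich sheaf. The key point is the classical fact that for $X$ smooth and irreducible with generic point $\eta$, an element of $S(k(X))$ lies in the subsheaf defined by the codimension-one conditions if and only if it does so Nisnevich-locally; concretely, the intersection $\bigcap_{x\in X^{(1)}}\Ocal_{X,x}$ of local rings inside $k(X)$ equals the global sections $\Ocal_X(X)$ by the standard valuative criterion for smooth (normal) schemes, and the Nisnevich-local nature of the codimension-one points makes the sheaf axiom into a statement about gluing elements of a fixed set $S(k(X))$ subject to local membership conditions. Here I would invoke Morel's axioms giving compatibility of the specialization maps $s_v$ with the two discrete valuations arising from a codimension-two point — this is exactly what guarantees that the definition is consistent and that henselization does not enlarge the set of allowed sections. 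After this, by construction $\tilde S$ satisfies conditions (i) and (ii) of an unramified sheaf, so $\tilde S\in\Sh^\ur(\Sm_k)$, and restricting $\tilde S$ back to $\Fcal_k$ recovers $S$ together with its subsets $S(\Ocal_v)$ and specialization maps (the latter because for a smooth model of a valuation $v$ the specialization map of the datum is forced to coincide with the geometric one by the compatibility with residue fields).

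Conversely, starting from an unramified sheaf $T\in\Sh^\ur(\Sm_k)$, its restriction to $\Fcal_k$ together with the induced subsets and specialization maps is an unramified datum by definition, and I must show $\widetilde{T|_{\Fcal_k}}\cong T$ naturally. This is precisely the content of axiom (ii): $T(X) = \bigcap_{x\in X^{(1)}} T(\Ocal_{X,x})$ for $X$ connected, and $T(\Ocal_{X,x})$ is determined by the datum since $\Ocal_{X,x}$ is the local ring of a codimension-one point. Finally I would check that both constructions are functorial and that the natural transformations $\mathrm{id}\Rightarrow R\circ\tilde{(-)}$ and $\widetilde{R(-)}\Rightarrow\mathrm{id}$ are isomorphisms, which follows formally from the two verifications above. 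The main obstacle I anticipate is the sheaf-gluing step for $\tilde S$: one needs the two-fold compatibility of specialization maps at codimension-two points, together with the fact that for an étale neighbourhood $U\to X$ the preimages of a codimension-one point $x$ are again codimension-one points with the same completed local ring, to see that an element of $S(k(X))$ satisfying the codimension-one conditions Nisnevich-locally already satisfies them globally; handling this carefully, rather than the formal bookkeeping of functoriality, is where the real work lies, and it is essentially Morel's argument in \cite{Mor} which I would cite for the delicate local claims.
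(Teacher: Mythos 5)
The paper's entire proof of this lemma is a citation to Morel's Theorem 2.11 in \cite{Mor}, and your proposal is a correct outline of exactly that argument (quasi-inverse via $\tilde S(X)=\bigcap_{x\in X^{(1)}}S(\Ocal_{X,x})$, with the real work in the restriction maps and the Nisnevich gluing via compatibility of specializations at codimension-two points), deferring the delicate local claims to the same source. So you take essentially the same approach as the paper, just with more of the construction spelled out.
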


\begin{proof}
	See \cite[Theorem 2.11]{Mor}.
\end{proof}

\begin{theorem}\label{pi0_universal}
	For any smooth proper variety $X$ over $k$, the morphism $X\to \pi_0^\br(X)$ is initial among morphisms from $X$ to $\A$-invariant unramified sheaves.
	Moreover, if $X\to Y$ is a birational morphism between smooth proper varieties over $k$, then the induced morphism $\pi_0^\br(X)\to \pi_0^\br(Y)$ is an isomorphism.\end{theorem}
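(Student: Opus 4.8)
The plan is to derive both assertions from Lemma~\ref{unramified_datum}, the second being formal once the first is in hand. As a preliminary observation, $\pi_0^\br(X)$ is itself an $\A$-invariant unramified sheaf: $\A$-invariance is Lemma~\ref{colliot-thelene}, and unramifiedness holds because for a birational sheaf $S$ and any $W\in\Sm_k^\conn$ the restriction $S(W)\to S(k(W))$ is bijective while $S(\Ocal_{W,x})=S(k(W))$ for every $x\in W^{(1)}$, so conditions (i) and (ii) are immediate. Hence it is legitimate to ask whether $X\to\pi_0^\br(X)$ is initial among morphisms to $\A$-invariant unramified sheaves.

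To prove initiality, fix $f\colon X\to S$ with $S$ an $\A$-invariant unramified sheaf. By Lemma~\ref{unramified_datum} it suffices to construct a morphism of unramified $\Fcal_k$-data $\pi_0^\br(X)\to S$ compatible with $f$. For $F\in\Fcal_k$ the map $f_F\colon X(F)\to S(F)$ is constant on $R$-equivalence classes: given $\gamma\colon\Abb^1_F\to X$, one has $f\circ\gamma\in S(\Abb^1_F)=S(F)$ by $\A$-invariance of $S$, and since $\{0\}$ and $\{1\}$ both split $\Abb^1_F\to\Spec F$ this forces $f(\gamma(0))=f(\gamma(1))$; chaining these equalities gives the claim. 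Therefore $f_F$ factors through $\pi_0^\br(X)(F)=X(F)/R$ via a map $h_F$, visibly natural in $F$. To see that $(h_F)_F$ respects the unramified structure I would use properness of $X$: for $(F,v)\in\Vcal_k$ we have $\pi_0^\br(X)(\Ocal_v)=\pi_0^\br(X)(F)$, and any $b\in X(F)$ extends by the valuative criterion to $\tilde b\colon\Spec\Ocal_v\to X$; writing $\Ocal_v\cong\Ocal_{W,w}$ with $W\in\Sm_k$ and $w\in W^{(1)}$, the morphism $\tilde b$ factors through an open neighbourhood of $w$ in $W$, and composing with $f$ shows $h_F([b])\in S(\Ocal_v)$, while restricting this spread-out section to the closure of $w$ shows $s_v(h_F([b]))=h_{k(v)}([\bar b])$, where $\bar b\in X(k(v))$ is the value of $\tilde b$ at the closed point; this equals $h_{k(v)}(s_v([b]))$ because the specialization map of the birational sheaf $\pi_0^\br(X)$ carries $[b]$ to $[\bar b]$. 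This produces $h\colon\pi_0^\br(X)\to S$. It is unique because $X(F)\to\pi_0^\br(X)(F)$ is surjective for each $F$, forcing $h_F$, and by Lemma~\ref{unramified_datum} a morphism of unramified sheaves is determined by the induced morphism of $\Fcal_k$-data, hence by its values on $\Fcal_k$; and $h\circ(X\to\pi_0^\br(X))=f$ since both sides restrict to the same map on the generic point of $X$ and $S(X)\hookrightarrow S(k(X))$.

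For the birational invariance, let $g\colon X\to Y$ be a birational morphism of smooth proper varieties, so that $g$ induces an isomorphism $k(Y)\xrightarrow{\sim}k(X)=:K$ on function fields; consequently $g^*$ is bijective on sections of any birational sheaf. By the initiality established above, applied to the composite $X\xrightarrow{g}Y\to\pi_0^\br(Y)$, we obtain $\varphi\colon\pi_0^\br(X)\to\pi_0^\br(Y)$ with $\varphi\circ(X\to\pi_0^\br(X))=(X\xrightarrow{g}Y\to\pi_0^\br(Y))$. Conversely, the $R$-class $[\eta_X]$ of the generic point of $X$, viewed through $X(K)/R=\pi_0^\br(X)(K)=\pi_0^\br(X)(Y)=\Hom(Y,\pi_0^\br(X))$, is a morphism $\psi\colon Y\to\pi_0^\br(X)$ satisfying $\psi\circ g=(X\to\pi_0^\br(X))$, since $g^*\colon\pi_0^\br(X)(Y)\to\pi_0^\br(X)(X)$ is the identity of $X(K)/R$; applying initiality of $Y\to\pi_0^\br(Y)$ to $\psi$ yields $\chi\colon\pi_0^\br(Y)\to\pi_0^\br(X)$ with $\chi\circ(Y\to\pi_0^\br(Y))=\psi$. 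Now $(\chi\circ\varphi)\circ(X\to\pi_0^\br(X))=\chi\circ(X\xrightarrow{g}Y\to\pi_0^\br(Y))=\psi\circ g=(X\to\pi_0^\br(X))$, so uniqueness in the universal property forces $\chi\circ\varphi=\id$; likewise $(\varphi\circ\chi)\circ(Y\to\pi_0^\br(Y))=\varphi\circ\psi=(Y\to\pi_0^\br(Y))$ because $g$ sends the generic point of $X$ to that of $Y$, so $\varphi\circ\chi=\id$. Thus $\varphi$ is an isomorphism; note that this argument uses no factorization of $g$, in particular no resolution of singularities.

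I expect the main obstacle to be the compatibility of $(h_F)_F$ with specialization maps in the proof of initiality. It requires describing the specialization map of the birational sheaf $\pi_0^\br(X)$ concretely, as the operation of extending a rational point across a discrete valuation and then evaluating it at the closed point (this is where properness of $X$ enters), and matching it against the structure maps of the abstract unramified datum attached to $S$. Once this bookkeeping is in place, everything else is the dictionary of Lemma~\ref{unramified_datum} and formal manipulation of universal properties.
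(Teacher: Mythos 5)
Your proposal is correct. For the first statement it follows the paper's own route almost exactly: $\A$-invariance of $S$ lets $a_F$ descend through $R$-equivalence to give $h_F$ on each $F\in\Fcal_k$, and the valuative criterion of properness (extend $b\in X(F)$ to $\Spec\Ocal_v\to X$, spread out, evaluate at the closed point) supplies the compatibility with $S(\Ocal_v)$ and the specialization maps needed to invoke Lemma~\ref{unramified_datum}; your spreading-out phrasing is just an unpacking of the paper's two commutative diagrams, and the uniqueness argument via surjectivity of $X(F)\to X(F)/R$ is identical.

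For the second statement you take a genuinely different route. The paper reduces, via the universal property, to showing that $S(Y)\to S(X)$ is bijective for \emph{every} $\A$-invariant unramified sheaf $S$: the inclusion $S(Y)\subset S(X)$ inside $S(k(X))$ is clear, and the reverse inclusion $S(X)\subset S(\Ocal_{Y,y})$ for $y\in Y^{(1)}$ comes from one more application of the valuative criterion to extend $\Spec k(X)\to X$ over $\Spec\Ocal_{Y,y}$; Yoneda then gives the isomorphism. You instead build an explicit two-sided inverse $\chi$ to $\varphi$ by exploiting that $\pi_0^\br(X)$ is itself an admissible test object and is birational, so that a morphism $Y\to\pi_0^\br(X)$ is determined by (and can be manufactured from) a class in $X(K)/R$ at the common function field; the identities $\chi\circ\varphi=\id$ and $\varphi\circ\chi=\id$ then follow from uniqueness in the universal property after checking equality at the generic point. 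Both arguments are valid and use only what has been established; the paper's is slightly shorter and isolates the geometric input (the valuative criterion applied to $Y^{(1)}$) in one clean statement about all test sheaves, while yours is more formally categorical and concentrates all geometric input in the first half of the proof.
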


\begin{proof}
	Firstly, $\pi_0^\br(X)$ itself is $\A$-invariant by Lemma \ref{colliot-thelene} and unramified by definition.
	For any $\A$-invariant unramified sheaf $S$ and $a\in S(X)$, we prove that there exists a unique morphism of sheaves $f\colon \pi_0^\br(X)\to S$ which makes the following diagram commute:
	$$
	\xymatrix{
		X\ar[r]^-a\ar[d]		&S.\\
		\pi_0^\br(X)\ar[ur]_-f
	}
	$$
	Since $S$ is unramified, commutativity of the above diagram is equivalent to that of
	$$
	\xymatrix{
		&X(F)\ar[r]^-{a_F}\ar[d]		&S(F)\\
		X(F)/R\ar@{=}[r]&\pi_0^\br(X)(F)\ar[ur]_-{f_F}
	}
	$$
	for each $F\in \Fcal_k$.
	This proves the uniqueness of $f_F$ and hence the uniqueness of $f$.
	
	We prove the existence of $f$.
	First we construct $f_F$ for each $F\in \Fcal_k$ so that the above diagram becomes commutative.
	Consider the following commutative diagram for $\varepsilon=0,1$:
	$$
	\xymatrix{
		X(\Abb^1_F)\ar[r]^-{a_{\Abb^1_F}}\ar[d]^-{i_\varepsilon^*}		&S(\Abb^1_F)\ar[d]^-{i_\varepsilon^*}\\
		X(F)\ar[r]^-{a_F}									&S(F).
	}
	$$
	Here $i_\varepsilon$ denote the $F$-valued point $F \to \Abb^1_F$ with coordinate $\varepsilon$.
	Since $S$ is $\A$-invariant we have $i_0^*=i_1^*\colon S(\Abb^1_F)\to S(F)$.
	This shows that $a_F$ descends to $R$-equivalence classes and gives a map $f_F\colon \pi_0^\br(X)(F)\to S(F)$ for each $F\in \Fcal_k$.
	
	Next we have to show that $\{f_F\}_{F\in \Fcal_k}$ extends to a morphism of sheaves.
	Since $\pi_0^\br(X)$ and $S$ are both unramified, it suffices to check that $\{f_F\}_{F\in \Fcal_k}$ gives a morphism in $\Data^\ur$ by Lemma \ref{unramified_datum}.
	Let $(F,v)\in \Vcal_k$.
	Consider the following diagram:
	$$
	\xymatrix{
		X(\Ocal_v)\ar[r]\ar@/^15pt/[rr]^-{a_{\Ocal_v}}\ar[d]	&\pi_0^\br(X)(\Ocal_v)\ar[d]^-{\cong}					&S(\Ocal_v)\ar@{^(->}[d]\\
		X(F)\ar@{->>}[r]			&\pi_0^\br(X)(F)\ar[r]^-{f_F}			&S(F).
	}
	$$
	The left square and the total rectangle are commutative.
	By the valuative criterion of properness, the left vertical map is bijective.
	Hence we have $f_F(\pi_0^\br(X)(\Ocal_v))\subset S(\Ocal_v)$, and the map $X(\Ocal_v)\to \pi_0^\br(X)(\Ocal_v)$ is surjective.
	Consider the following diagram:
	$$
	\xymatrix{
		X(\Ocal_v)\ar@{->>}[r]\ar[d]		&\pi_0^\br(X)(\Ocal_v)\ar[r]^-{f_F}\ar[d]		&S(\Ocal_v)\ar[d]\\
		X(k(v))\ar[r]				&\pi_0^\br(X)(k(v))\ar[r]^-{f_{k(v)}}				&S(k(v)).
	}
	$$
	Again the left square and the total rectangle are commutative.
	Hence the right square is also commutative and we get a morphism in $\Data^\ur$.
	This proves the first statement.
	
	Now we prove the second statement.
	By the universal property we have just proved, it suffices to show that $S(Y)\to S(X)$ is bijective for any $\A$-invariant unramified sheaf $S$.
	Let $F$ be the function field of $X$.
	Considering $S(X)$ and $S(Y)$ as subsets of $S(F)$, we have an inclusion $S(Y)\subset S(X)$.
	To prove $S(X)\subset S(Y)$, it suffices to prove $S(X)\subset S(\Ocal_{Y,y})$ for any $y\in Y^{(1)}$.
	By the valuative criterion of properness, there is a morphism $\Spec \Ocal_{Y,y}\to X$ extending $\Spec F\to X$.
	This implies the required inclusion.
\end{proof}

\section{$\Abb^1$-locality of classifying spaces}

In this section we work over a noetherian base scheme $S$ of finite Krull dimension.
We prove an extension of \cite[Section 4 Proposition 1.16]{MV99} and use it to prove Theorem \ref{bga1local}.
Much of the discussion here goes back to work of Giraud and is folklore in the literature on stacks and homotopy theory.
For related results in a different formulation, see \cite[Chapter 9]{Jardine_LHT}.
We also note that most of the results here are not special to the  Nisnevich topology on smooth schemes.

We write $\sSh(\Sm_S)$ for the category of simplicial sheaves on $\Sm_S$ equipped with the simplicial model structure \cite[Section 2 Definition 1.2]{MV99}, and $\Hcal_s(S)$ for its homotopy category.
For a sheaf of groups $G$ on $\Sm_S$ and $\Xcal\in \sSh(\Sm_S)$, we write $P(\Xcal, G)$ for set of isomorphism classes of Nisnevich $G$-torsors on $\Xcal$.

\begin{lemma}\label{bg_universality}
	Let $G$ be a sheaf of groups on $\Sm_S$.
	Then there is a fibrant simplicial sheaf $\Bcal G\in \sSh(\Sm_S)$ together with a $G$-torsor $\Ecal G$ on it such that 
	$$
	\Hom_{\sSh(\Sm_S)}(\Xcal, \Bcal G)\to P(\Xcal,G);~f\mapsto f^*\Ecal G
	$$
	induces a bijection $\Hom_{\Hcal_s(S)}(\Xcal,\Bcal G)\cong P(\Xcal,G)$.
\end{lemma}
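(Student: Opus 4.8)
The statement to prove is Lemma~\ref{bg_universality}: the existence of a fibrant classifying space $\Bcal G$ representing $G$-torsors in the homotopy category.

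\medskip

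The plan is to reduce this to the standard machinery of simplicial classifying spaces and descent. First I would recall the \emph{simplicial} classifying object: for the sheaf of groups $G$ one forms the simplicial sheaf $BG$ with $(BG)_n = G^{\times n}$ and the usual bar-construction face and degeneracy maps, together with the simplicial sheaf $EG$ with $(EG)_n = G^{\times (n+1)}$ carrying the free diagonal $G$-action, so that $EG \to BG$ is a $G$-torsor in the strict/levelwise sense and $EG$ is simplicially contractible. This is pure simplicial algebra internal to the topos $\Sh(\Sm_S)$ and requires no hypothesis on $G$. The point is then that for a simplicial sheaf $\Xcal$, the simplicial mapping space $\underline{\Hom}(\Xcal, BG)$ computes, degreewise, cocycles valued in $G$ — i.e. \v{C}ech-style $1$-cocycles for the (already covering) identity map — so on $\pi_0$ it gives exactly the pointed set of $G$-torsors that are \emph{trivial Nisnevich-locally in the naive sense of the site}, but we need all Nisnevich $G$-torsors.

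\medskip

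The second and essential step is therefore to pass from $BG$ to a fibrant replacement $\Bcal G$ in the simplicial (Nisnevich-local) model structure on $\sSh(\Sm_S)$, and to identify $\Hom_{\Hcal_s(S)}(\Xcal,\Bcal G)$ with $P(\Xcal,G)$. Here I would argue as follows. Since $\Bcal G$ is fibrant, $\Hom_{\Hcal_s(S)}(\Xcal,\Bcal G) = \pi_0 \underline{\Hom}(\Xcal,\Bcal G)$, and since cofibrant replacement of $\Xcal$ (e.g.\ its levelwise-free/skeletal replacement) does not change the torsor set on the target by descent, we may compute with $\Xcal$ itself. The fibrant replacement map $BG \to \Bcal G$ is a local weak equivalence, and evaluating on a henselian local scheme (a point of the Nisnevich site) kills the distinction, so $\Bcal G$ still has $\pi_0 = \ast$ and $\pi_1 = G$ as Nisnevich sheaves, with higher homotopy sheaves vanishing — i.e.\ $\Bcal G$ is a $K(G,1)$. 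The classical identification (Giraud; see \cite[Chapter 9]{Jardine_LHT}, and for the smooth Nisnevich case \cite[Section 4 Proposition 1.15--1.16]{MV99}) then gives that homotopy classes of maps into a fibrant $K(G,1)$ classify $G$-torsors for the topology, via pullback of the canonical torsor $\Ecal G := $ (a torsor representing the class of $EG \to BG$ after fibrant replacement, obtained by factoring the composite appropriately so that $\Ecal G \to \Bcal G$ is again a $G$-torsor). Naturality of this identification in $\Xcal$ is formal. I would spell out that the map $f \mapsto f^*\Ecal G$ is well-defined on homotopy classes because a simplicial homotopy $\Xcal \times \Delta^1 \to \Bcal G$ pulls $\Ecal G$ back to a torsor on $\Xcal \times \Delta^1$ restricting to $f_0^*\Ecal G$ and $f_1^*\Ecal G$ at the two ends, and $\A^1$-- or rather $\Delta^1$--invariance of the torsor set (torsors on $\Xcal\times\Delta^1$ restricted to the ends agree, since the two inclusions are simplicially homotopic and $P(-,G)$ is a homotopy-invariant functor for this model structure) forces $f_0^*\Ecal G \cong f_1^*\Ecal G$.

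\medskip

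The main obstacle is the surjectivity of $f \mapsto f^*\Ecal G$: given an arbitrary Nisnevich $G$-torsor $\mathcal P$ on $\Xcal$, one must produce a map $\Xcal \to \Bcal G$ pulling back $\Ecal G$ to something isomorphic to $\mathcal P$. The standard resolution is that a $G$-torsor $\mathcal P$, being locally trivial in the Nisnevich topology, admits a (generalized, i.e.\ simplicial) trivializing hypercover $\mathcal U_\bullet \to \Xcal$ on which $\mathcal P$ acquires a section, hence a strict cocycle, hence a map $\mathcal U_\bullet \to BG$; since $\mathcal U_\bullet \to \Xcal$ is a local weak equivalence and $\Bcal G$ is fibrant, this map extends along $\mathcal U_\bullet \to \Xcal$ up to homotopy to a map $\Xcal \to \Bcal G$, and one checks the pullback of $\Ecal G$ recovers the class of $\mathcal P$ by construction. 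Injectivity is the analogous statement one dimension up: two maps with isomorphic pullbacks agree after passing to a common refinement where the isomorphism is realized, giving a homotopy. I expect the bookkeeping around strictly-compatible choices (so that $\Ecal G \to \Bcal G$ is an honest torsor and the cocycle-to-map dictionary is natural) to be the only real subtlety; all of it is available in the cited references, so in the write-up I would state the bar construction, cite \cite[Section 4 Propositions 1.15--1.16]{MV99} and \cite[Chapter 9]{Jardine_LHT} for the torsor classification over a site, and note that nothing is special to the Nisnevich topology on $\Sm_S$.
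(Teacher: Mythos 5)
Your proposal is correct and ultimately rests on the same source as the paper, whose entire proof of this lemma is the citation to \cite[Section 4 Proposition 1.15]{MV99}; your additional sketch of the bar construction, fibrant replacement, and hypercover/descent argument is the standard proof of that cited result and contains no gaps. Since both you and the paper defer the real work to Morel--Voevodsky (and Jardine), the approaches are essentially the same.
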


\begin{proof}
	See \cite[Section 4 Proposition 1.15]{MV99}.
\end{proof}

Let us fix such $\Bcal G$ and $\Ecal G$.
For any $U\in \Sm_S$ the simplicial set $\Bcal G(U)$ is a Kan complex since $\Bcal G$ is fibrant.
We want to describe this Kan complex explicitly.

\begin{lemma}\label{truncated}
	We have $\pi_n(\Bcal G(U),p)=0~(n\geq 2)$ for any vertex $p\in \Bcal G(U)_0$.
\end{lemma}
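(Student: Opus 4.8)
The statement is that $\Bcal G(U)$ has vanishing higher homotopy groups (degrees $\geq 2$), so it is a homotopy $1$-type. The natural approach is to identify $\Bcal G(U)$ with a classifying space constructed directly from $G(U)$-torsors, or better, with the nerve of a groupoid. I would first recall that a $G$-torsor on $\Xcal \in \sSh(\Sm_S)$ is, by definition, a simplicial sheaf $\Ecal$ with a free $G$-action and a map $\Ecal \to \Xcal$ exhibiting $\Xcal$ as the quotient $\Ecal/G$, with local triviality in the Nisnevich topology. Taking $\Xcal = U$ representable, such a torsor restricted over $U$ is a sheaf of sets with free transitive $G$-action after Nisnevich-localizing; the category (groupoid) of such torsors on $U$, with $G$-equivariant isomorphisms, I will call $\mathrm{Tors}(U,G)$. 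The set of connected components of its nerve is $P(U,G) = H^1(U,G)$, and the automorphism group of any object is (the global sections over $U$ of the automorphism sheaf of the trivial torsor, which is) $G$ acting by translation — more precisely $\Aut(E) \cong G(U)$ for the trivial torsor and is a torsor-twisted form in general, but in any case a group. The key classical fact (Giraud; cf. \cite[Chapter 9]{Jardine_LHT}) is that $\Bcal G$ restricted to sections over $U$ is weakly equivalent to the nerve of this groupoid.

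Concretely, the steps I would carry out are: (1) For each $U \in \Sm_S$, construct a groupoid $\Gcal_U$ whose objects are the $G$-torsors on $U$ (in the simplicial-sheaf sense, restricted to the small Nisnevich site of $U$, which are just ordinary Nisnevich $G$-torsor sheaves) and whose morphisms are $G$-equivariant isomorphisms. (2) Show that the nerve $N\Gcal_U$ is a Kan complex (nerves of groupoids always are) with $\pi_n(N\Gcal_U) = 0$ for $n \geq 2$ and $\pi_1(N\Gcal_U, p) = \Aut_{\Gcal_U}(p)$, which is standard. (3) Exhibit a natural weak equivalence $N\Gcal_U \xrightarrow{\sim} \Bcal G(U)$, or at least a natural bijection on all homotopy groups. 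For (3) the cleanest route is to use Lemma \ref{bg_universality}: it shows that maps into $\Bcal G$ in the homotopy category classify torsors, and by a standard argument (sheafifying the nerve construction, or using that $\Bcal G$ can be taken to be a fibrant replacement of the simplicial sheaf $U' \mapsto N\Gcal_{U'}$ obtained from the simplicial-object/bar-construction $\cdots\rrrarrow G\times G \rrarrow G \rarrow \ast$) one identifies the sections. Since fibrant replacement in the simplicial model structure preserves the property of having vanishing homotopy above degree $1$ on each representable $U$ — because Nisnevich-locally these presheaves of groupoids already satisfy descent in the relevant range — the conclusion transfers to $\Bcal G(U)$.

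Alternatively, and perhaps more self-containedly, I would argue directly using the fact that $\Bcal G$ may be modeled by the diagonal of the bisimplicial sheaf obtained from the simplicial sheaf $[n] \mapsto G^{\times n}$ (the bar construction $BG$), then Nisnevich-locally fibrantly replaced. The presheaf $U \mapsto BG(U)$ of simplicial sets already has $\pi_n = 0$ for $n \geq 2$ on every $U$, since $BG(U)$ is the nerve of the one-object groupoid with morphisms $G(U)$. The fibrant replacement $\Bcal G$ differs from $BG$ by a Nisnevich-local weak equivalence, and the point is that taking sections over $U$ of a fibrant model of a simplicial presheaf computes Nisnevich hypercohomology, which for a $1$-truncated object lands in degrees $0,1$ only — there are no higher homotopy groups to acquire. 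I would make this precise via the descent spectral sequence / Bousfield–Kan fringed spectral sequence for the Nisnevich topology, whose $E_2$-page in the relevant total degree $\leq -2$ involves $H^p_{\Nis}(U, \pi_q(BG))$ with $\pi_q(BG) = 0$ for $q \geq 2$, and $H^p_{\Nis}(U,-)$ contributes to $\pi_{q-p}$; since $q \leq 1$, nothing survives in $\pi_n$ for $n \geq 2$.

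The main obstacle is step (3): making rigorous the comparison between the abstract fibrant $\Bcal G$ furnished by Lemma \ref{bg_universality} and an explicit model whose sections are manifestly nerves of groupoids, and controlling what fibrant replacement does to homotopy sheaves. The cleanest way around it is probably to observe that $\Bcal G$ is, up to weak equivalence, the $1$-truncation-preserving fibrant replacement of the bar construction, and to invoke that the Nisnevich site of $\Sm_S$ has finite cohomological dimension properties and that $G, H^1(-,G)$-type obstructions are the only ones — i.e., to cite the Morel–Voevodsky comparison \cite[Section 4 Proposition 1.16]{MV99} together with the descent spectral sequence. I would be careful to note (as the paper itself warns) that none of this uses anything special about smoothness or the Nisnevich topology beyond the existence of a reasonable local fibrant replacement functor.
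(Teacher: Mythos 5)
Your proposal takes a genuinely different route from the paper. The paper's proof is elementary: it uses only Lemma \ref{bg_universality} to translate the vanishing of $\pi_n$ for $n\geq 2$ into the statement that every $G$-torsor on $\del\Delta^{n+1}\times U$ extends to $\Delta^{n+1}\times U$, and then observes, by checking on stalks, that since $\del\Delta^{n+1}$ is simply connected for $n\geq 2$ any such torsor is pulled back from $U$ (it is isomorphic to $\del\Delta^{n+1}\times\calHom_{\del\Delta^{n+1}}(\del\Delta^{n+1},\Ycal)$), so the extension is immediate. No model for $\Bcal G$, no bar construction, and no spectral sequence are needed. Your second route --- identifying $\Bcal G$ up to sectionwise homotopy equivalence with a fibrant replacement of the bar construction $BG$ and running the descent spectral sequence $H^p_{\Nis}(U,\pi_q)\Rightarrow \pi_{q-p}$, whose contributions to $\pi_n$ for $n\geq 2$ all have $q\geq 2$ and hence vanish --- is a standard and essentially correct argument, but it buys generality at the cost of heavier machinery: you must justify the comparison of models (both fibrant, all objects cofibrant in the injective structure, so an isomorphism in $\Hcal_s(S)$ is realized by a sectionwise equivalence), and you must address convergence and the fringe effects of the spectral sequence, which you defer. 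The finite Krull dimension hypothesis on $S$ is what makes this work, and you should say so explicitly rather than gesturing at it.

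One concrete caution about your first route: the weak equivalence $N\Gcal_U\xrightarrow{\sim}\Bcal G(U)$ is exactly the content of Corollary \ref{MV1.16} in the paper, which is \emph{deduced from} Lemma \ref{truncated} together with the construction of the functor $\Phi$. Invoking it here would be circular in the paper's logical order. Likewise, your assertion that ``fibrant replacement preserves the property of having vanishing homotopy above degree $1$ on each representable $U$ because these presheaves of groupoids already satisfy descent in the relevant range'' is precisely the statement to be proved, not a fact you may assume; as written it begs the question, and only your spectral-sequence variant actually discharges it.
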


\begin{proof}
	It suffices to prove that the restriction map from $[\Delta^{n+1},\Bcal G(U)]$ to $[\del\Delta^{n+1},\Bcal G(U)]$ is surjective for $n\geq 2$.
	By Lemma \ref{bg_universality} this is equivalent to saying that any $G$-torsor on $\del\Delta^{n+1}\times U$ can be extended to $\Delta^{n+1}\times U$.
	Let $\Ycal\to \del\Delta^{n+1}\times U$ be a $G$-torsor, and consider the evaluation morphism $\mathrm{ev}\colon \del\Delta^{n+1}\times \calHom_{\del\Delta^{n+1}}(\del\Delta^{n+1},\Ycal)\to \Ycal$.
	For any $V\in \Sm_S$ and a point $x\in V$, $\mathrm{ev}_x\colon \del\Delta^{n+1}\times \Hom_{\del\Delta^{n+1}}(\del\Delta^{n+1},\Ycal_x)\to \Ycal_x$ is an isomorphism since any $G_x$-torsor on $\del\Delta^{n+1}\times U_x$ is trivial.
	Hence $\Ycal\cong \del\Delta^{n+1}\times\calHom_{\del\Delta^{n+1}}(\del\Delta^{n+1},\Ycal)$ and the claim is trivial.
\end{proof}

Thus the canonical map $\Bcal G(U)\to \mathrm{N}(\Pi_1(\Bcal G(U)))$ is a homotopy equivalence, where $\Pi_1$ denotes the fundamental groupoid and $\mathrm{N}$ denotes its nerve.
Let $\GTors_U$ denote the groupoid of $G$-Torsors on $U$.
We construct a functor $\Phi\colon \Pi_1(\Bcal G(U))\to \GTors_U$ as follows.
For an object $p\in \Bcal G(U)_0$, we define $\Phi(p)=p^*\Ecal G$.
Let $e\in \Bcal G(U)_1$ an edge from $p_0\in \Bcal G(U)_0$ to $p_1\in \Bcal G(U)_0$.
Then $e^*\Ecal G$ is a $G$-torsor on $\Delta^1\times U$ which restricts to $p_\varepsilon^*\Ecal G$ over $\Delta^{\{i\}}\times U$ for $i=0,1$.
Using the next lemma, we get a unique isomorphism of $G$-torsors $\rho_e\colon \Delta^1\times p_0^*\Ecal G\to e^*\Ecal G$ which restricts to the identity over $\Delta^{\{0\}}\times U$.
We define $\Phi(e)$ to be the restriction of $\rho_e$ over $\Delta^{\{1\}}\times U$.
This indeed gives a functor $\Phi\colon \Pi_1(\Bcal G(U))\to \GTors_U$, again by the next lemma.

\begin{lemma}
	Let $\Ycal\to \Delta^n\times U$ be a $G$-torsor and $\Ycal_0$ its restriction to $\Delta^{\{0\}}\times U$.
	Then there exists a unique isomorphism of $G$-torsors $\rho\colon \Delta^n\times \Ycal_0\to \Ycal$ which restricts to the identity over $\Delta^{\{0\}}\times U$.
\end{lemma}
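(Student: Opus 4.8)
The plan is to deduce the statement from the contractibility of the simplicial set $\Delta^n$, by unwinding what a $G$-torsor on the simplicial sheaf $\Delta^n\times U$ is, one simplicial degree at a time. Since $G$ is simplicially constant, a $G$-torsor $\Ycal\to\Delta^n\times U$ amounts to, in each degree $m$, a $G$-torsor on $(\Delta^n\times U)_m=\coprod_{\sigma\colon[m]\to[n]}U$, that is, a $G$-torsor $\Ycal_\sigma$ on $U$ for each simplex $\sigma$ of $\Delta^n$, together with the simplicial structure maps. Because a morphism of $G$-torsors over a fixed base is automatically an isomorphism, these structure maps are precisely the data of an isomorphism $\Ycal_{\sigma'}\cong\Ycal_\sigma$ of $G$-torsors on $U$ for every arrow $\sigma\to\sigma'$ in the slice category $\Delta/[n]$ of simplices of $\Delta^n$, compatibly with composition. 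In other words, a $G$-torsor on $\Delta^n\times U$ is the same datum as a functor $F\colon(\Delta/[n])^{\op}\to\GTors_U$; under this dictionary the pullback $\Delta^n\times\Ycal_0=\pr^*\Ycal_0$ corresponds to the constant functor, restriction to $\Delta^{\{0\}}\times U$ corresponds to evaluation at the zeroth vertex $v_0\colon[0]\to[n]$, and $\Ycal_0$ itself is $F(v_0)$.

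Next I would exploit that $\Delta/[n]$ has a terminal object, namely $\id_{[n]}$, so that $\iota:=\id_{[n]}$ is an \emph{initial} object of $(\Delta/[n])^{\op}$. Any functor from a category with an initial object into a groupoid is canonically isomorphic to the constant functor at its value on the initial object: the component at $\tau$ of the natural isomorphism $\const_{F(\iota)}\Rightarrow F$ is the image $F(\iota\to\tau)$ of the unique arrow $\iota\to\tau$, and this assignment is natural and equals $\id$ at $\iota$. Post-composing the isomorphism $\const_{\Ycal_0}=\const_{F(v_0)}\cong\const_{F(\iota)}$ induced by $F(\iota\to v_0)^{-1}$ with this canonical one, I obtain a natural isomorphism $\rho\colon\const_{\Ycal_0}\Rightarrow F$ whose $v_0$-component is $\id$. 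Translating back, $\rho$ is an isomorphism of $G$-torsors $\Delta^n\times\Ycal_0\to\Ycal$ restricting to the identity over $\Delta^{\{0\}}\times U$, which settles existence.

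For uniqueness, two such isomorphisms differ by a natural automorphism $\mu$ of $F$ with $\mu_{v_0}=\id$. Naturality with respect to the unique arrow $\iota\to\tau$ gives $\mu_\tau=F(\iota\to\tau)\circ\mu_\iota\circ F(\iota\to\tau)^{-1}$ for every object $\tau$; taking $\tau=v_0$ forces $\mu_\iota=\id$, hence $\mu=\id$. (Equivalently: the automorphism sheaf of $\Ycal$ over $\Delta^n\times U$ is the pullback along $\pr$ of a sheaf of groups on $U$, and the restriction of its sections along $\Delta^{\{0\}}\times U\hookrightarrow\Delta^n\times U$ is injective because $\Delta^n$ is connected.)

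The main obstacle is the first paragraph: one must fix precisely what ``$G$-torsor on the simplicial sheaf $\Delta^n\times U$'' means in the setting of Lemma \ref{bg_universality} (following \cite{MV99}) and verify that a compatible family of $G$-torsors on the copies of $U$ indexed by the simplices of $\Delta^n$ --- with the simplicial face and degeneracy maps reinterpreted as isomorphisms over $U$ --- is literally the same as a functor on $(\Delta/[n])^{\op}$ valued in $\GTors_U$. Once this identification is set up, the remaining steps are formal facts about functors out of a category with an initial object into a groupoid.
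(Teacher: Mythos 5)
Your proof is correct, but it takes a genuinely different route from the paper's. The paper handles existence by re-running the stalkwise argument of Lemma \ref{truncated}: the evaluation morphism exhibits $\Ycal$ as $\Delta^n\times\calHom_{\Delta^n}(\Delta^n,\Ycal)$ because every $G_x$-torsor over $\Delta^n\times U_x$ is trivial ($\Delta^n$ being contractible), which reduces to the case $\Ycal\cong\Delta^n\times\Ycal_0$; uniqueness is then also checked on stalks. You instead unwind the definition degreewise, identify a $G$-torsor on $\Delta^n\times U$ with a functor $(\Delta/[n])^{\op}\to\GTors_U$, and exploit the fact that the category of simplices of $\Delta^n$ has a terminal object $\id_{[n]}$, so that every such functor is canonically isomorphic to a constant one and normalizing at the vertex $v_0$ pins the isomorphism down uniquely. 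The two arguments trade different things: the paper's is shorter on the page because it reuses machinery already set up for Lemma \ref{truncated}, but it leans on stalkwise reasoning and on triviality of torsors over contractible simplicial sets; yours is entirely combinatorial and self-contained, delivers existence and uniqueness by a single formal argument about functors from a category with an initial object into a groupoid, and makes visible exactly which feature of $\Delta^n$ is used (its cone point), which also explains why the same statement would fail for, say, $S^1\times U$. The identification in your first paragraph, which you rightly flag as the only real work, does hold for Morel--Voevodsky torsors over simplicial sheaves: since $G$ is simplicially constant, freeness and the quotient condition are checked degreewise, the structure maps are $G$-equivariant maps over $\id_U$ between torsors on $U$ and hence isomorphisms, and natural transformations of the associated functors correspond exactly to $G$-equivariant morphisms over $\Delta^n\times U$.
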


\begin{proof}
	By the same argument as in the proof of Lemma \ref{truncated}, we may assume $\Ycal\cong \Delta^n\times \Ycal_0$.
	It suffices to show that if $\rho\colon \Delta^n\times \Ycal_0\to \Delta^n\times \Ycal_0$ is an isomorphism which restricts to the identity on $\Delta^{\{0\}}\times U$ then $\rho=\id$.
	This can be checked by taking stalks.
\end{proof}

The following lemma is easy.

\begin{lemma}\label{groupoid_lemma}
	Let $\Ccal$ and $\Dcal$ be groupoids and $\varphi\colon \Ccal\to \Dcal$ a functor.
	Then $\varphi$ is an equivalence of groupoids if and only if the following hold:
	\begin{enumerate}
		\item $\varphi$ is essentially surjective and conservative.
		\item For any $c\in \Ccal$, $\Aut_\Ccal(c)\to \Aut_\Dcal(\varphi(c))$ is an isomorphism.
	\end{enumerate}
\end{lemma}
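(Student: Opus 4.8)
The plan is to reduce to the standard fact that a functor between categories is an equivalence if and only if it is fully faithful and essentially surjective, and then to observe that for groupoids the conditions (i) and (ii) repackage precisely this criterion. For the ``only if'' direction, suppose $\varphi$ is an equivalence. Then $\varphi$ is essentially surjective, and it is conservative because every fully faithful functor reflects isomorphisms (lift the inverse of $\varphi(f)$ using fullness, then conclude with faithfulness); this gives (i). Restricting the bijections $\Hom_{\Ccal}(c,c)\xrightarrow{\sim}\Hom_{\Dcal}(\varphi(c),\varphi(c))$ coming from full faithfulness to automorphisms, and using that $\varphi$ preserves identities and composition, shows that $\Aut_{\Ccal}(c)\to\Aut_{\Dcal}(\varphi(c))$ is a group isomorphism for every $c$, which is (ii).

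For the ``if'' direction, assume (i) and (ii). Since essential surjectivity is part of (i), it suffices to show that $\varphi$ is fully faithful, i.e. that $\varphi\colon\Hom_{\Ccal}(c,c')\to\Hom_{\Dcal}(\varphi(c),\varphi(c'))$ is bijective for all $c,c'\in\Ccal$. If this hom-set is empty, then $c$ and $c'$ lie in distinct isomorphism classes of the groupoid $\Ccal$; since $\varphi$ is conservative (read at the level of objects: $\varphi(c)\cong\varphi(c')$ forces $c\cong c'$) the objects $\varphi(c)$ and $\varphi(c')$ also lie in distinct isomorphism classes, so $\Hom_{\Dcal}(\varphi(c),\varphi(c'))$ is empty and the map is vacuously bijective. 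If $\Hom_{\Ccal}(c,c')$ is nonempty, fix some $h\colon c\to c'$; then post-composition with $h$ and with $\varphi(h)$ give bijections $\Aut_{\Ccal}(c)\xrightarrow{\sim}\Hom_{\Ccal}(c,c')$ and $\Aut_{\Dcal}(\varphi(c))\xrightarrow{\sim}\Hom_{\Dcal}(\varphi(c),\varphi(c'))$ respectively, and by functoriality these identify $\varphi\colon\Hom_{\Ccal}(c,c')\to\Hom_{\Dcal}(\varphi(c),\varphi(c'))$ with $\varphi\colon\Aut_{\Ccal}(c)\to\Aut_{\Dcal}(\varphi(c))$, which is a bijection by (ii). Hence $\varphi$ is bijective on every hom-set, and together with essential surjectivity this makes $\varphi$ an equivalence.

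The statement is elementary, so I do not expect a real obstacle; the only step needing a little attention is the empty hom-set case in the converse, which is precisely where conservativity — and not (ii) — does the work, since (ii) only constrains hom-sets that are already nonempty. One should also check that the two post-composition bijections above are compatible with $\varphi$, but this is immediate from functoriality.
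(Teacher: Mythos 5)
Your proof is correct, and since the paper dismisses this lemma with ``The following lemma is easy'' and supplies no argument at all, there is nothing to compare against; your reduction to the standard ``fully faithful $+$ essentially surjective'' criterion, with the hom-sets $\Hom_\Ccal(c,c')$ handled by splitting into the empty and nonempty cases and identifying the nonempty case with $\Aut_\Ccal(c)\to\Aut_\Dcal(\varphi(c))$ via post-composition with a chosen $h$, is exactly the natural route. The one genuinely delicate point --- which you correctly flagged --- is the meaning of ``conservative'': under the standard definition (reflects isomorphisms among morphisms) the condition is vacuous for groupoids, and the ``if'' direction would then be false (take $\Ccal$ the discrete groupoid on two objects and $\Dcal$ the terminal groupoid: essentially surjective, vacuously conservative, trivial automorphism groups on both sides, yet not an equivalence). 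Your reading of conservativity as injectivity on isomorphism classes of objects is the one that makes the statement true, and it is also the one the paper implicitly uses in the subsequent application, where both surjectivity and injectivity on isomorphism classes are extracted from the bijection $\Hom_{\Hcal_s(S)}(U,\Bcal G)\cong P(U,G)$. Your remark that conservativity, not condition (ii), is what handles the empty hom-set case is exactly the right observation.
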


\begin{lemma}
	$\Phi\colon \Pi_1(\Bcal G(U))\to \GTors_U$ is an equivalence of groupoids.
\end{lemma}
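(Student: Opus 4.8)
The plan is to verify the two conditions of Lemma \ref{groupoid_lemma} for $\Phi$. For essential surjectivity, I would use Lemma \ref{bg_universality}: every $G$-torsor $P$ on $U$ is classified by a morphism $U\to \Bcal G$ in $\Hcal_s(S)$, hence (since $\Bcal G$ is fibrant and $U$ is cofibrant) by an actual simplicial map, i.e. a vertex $p\in \Bcal G(U)_0$ with $p^*\Ecal G$ isomorphic to $P$; thus $\Phi(p)\cong P$. For conservativity and the statement on automorphism groups, note that both reduce to understanding, for vertices $p_0,p_1\in \Bcal G(U)_0$, the map $\Phi$ induces on hom-sets
$$
\pi_1\text{-level Hom}:~\Hom_{\Pi_1(\Bcal G(U))}(p_0,p_1)\longrightarrow \Hom_{\GTors_U}(p_0^*\Ecal G,\,p_1^*\Ecal G).
$$
I would prove this map is a bijection for all $p_0,p_1$; conservativity follows because an isomorphism of torsors is in particular nonempty on both hom-sets being compatible, and condition (ii) is the special case $p_0=p_1$.

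The heart of the matter is therefore the bijectivity of the hom-set map. The left-hand side is $[e]$ for edges $e$ in $\Bcal G(U)$ from $p_0$ to $p_1$, modulo homotopy rel endpoints, which by Lemma \ref{bg_universality} (applied to $\Delta^1\times U$, $\del\Delta^1\times U$) is exactly the set of $G$-torsors on $\Delta^1\times U$ restricting to $p_\varepsilon^*\Ecal G$ at the two endpoints, modulo isomorphism rel restrictions — here I would use that $\pi_1(\Bcal G(U))$ hom-sets are computed via the $1$-skeleton together with $\pi_2=0$ from Lemma \ref{truncated}, so homotopies of edges rel endpoints correspond to $2$-simplices, and every extension/filling question is translated into a torsor extension question along $\del\Delta^n\times U\hookrightarrow \Delta^n\times U$. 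On the right-hand side, the preceding lemma (the rigidity statement $\Ycal\cong \Delta^n\times\Ycal_0$ with a \emph{unique} trivializing $\rho$) shows that giving a $G$-torsor on $\Delta^1\times U$ restricting to $p_0^*\Ecal G$ at $0$ is the same as giving its restriction at $1$ \emph{together with} an isomorphism with $p_1^*\Ecal G$ there — but the $\rho_e$ is canonical, so in fact the data is just an isomorphism $p_0^*\Ecal G\xrightarrow{\sim} p_1^*\Ecal G$, and this is precisely $\Phi(e)$. So I would argue: $\Phi$ is surjective on these hom-sets because any isomorphism of torsors $p_0^*\Ecal G\to p_1^*\Ecal G$ can be used to build a torsor $\Delta^1\times p_0^*\Ecal G$ on $\Delta^1\times U$ glued appropriately, giving an edge $e$ with $\Phi(e)$ the given isomorphism; and injective because two edges $e,e'$ with $\Phi(e)=\Phi(e')$ yield, via the $\rho$'s, an isomorphism of the corresponding torsors on $\Delta^1\times U$ rel endpoints, which by Lemma \ref{bg_universality} is a homotopy rel endpoints, i.e. $[e]=[e']$.

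The main obstacle I expect is the bookkeeping in translating "homotopy class of edges rel endpoints" into "torsor on $\Delta^1\times U$ up to isomorphism rel boundary restrictions" cleanly — one must be careful that Lemma \ref{bg_universality} gives bijections on $\pi_0$ of mapping spaces and that the fibration/cofibration hypotheses let one pass freely between simplicial maps and homotopy classes, and that $\pi_2(\Bcal G(U))=0$ is genuinely what makes $\Pi_1$ compute all the higher coherence (so that an edge is determined up to the relation generated by $2$-simplices). A secondary subtlety is checking that the $\rho_e$ construction is functorial enough that "build a torsor from an isomorphism" and "read off an isomorphism from a torsor" are mutually inverse; this is exactly what the uniqueness clause in the previous lemma buys us, so once that is invoked carefully the argument closes. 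Everything else — essential surjectivity, and the reduction of conservativity to the hom-set bijection — is formal.
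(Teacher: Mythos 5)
Your overall plan (essential surjectivity from Lemma \ref{bg_universality}, then bijectivity on hom-sets) is sound, and the essential-surjectivity step matches the paper. But the step you defer as ``bookkeeping'' is in fact the entire content of the lemma, and as written it has a gap: Lemma \ref{bg_universality} classifies morphisms $\Xcal\to\Bcal G$ only up to \emph{free} homotopy, by \emph{absolute} isomorphism classes of torsors on $\Xcal$. Your argument needs a \emph{relative} version: that homotopy classes rel $\del\Delta^1\times U$ of maps $\Delta^1\times U\to\Bcal G$ extending a fixed map on the boundary correspond to torsors on $\Delta^1\times U$ with fixed identifications over the endpoints, modulo isomorphism rel boundary. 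Concretely, when you ``build a torsor from an isomorphism $\alpha\colon p_0^*\Ecal G\to p_1^*\Ecal G$ and classify it,'' Lemma \ref{bg_universality} only hands you an edge between \emph{some} vertices $p_0',p_1'$ with $p_i'^*\Ecal G\cong p_i^*\Ecal G$, not an edge from $p_0$ to $p_1$ inducing the given $\alpha$; similarly, an isomorphism $e^*\Ecal G\cong e'^*\Ecal G$ rel endpoints is not, by the lemma as stated, a homotopy rel endpoints. Establishing the relative statement requires a further argument (e.g.\ via the fibration $\Bcal G(U)^{\Delta^1}\to\Bcal G(U)^{\del\Delta^1}$ and an identification of its fibers), which you do not supply.

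The paper's proof is engineered precisely to avoid this. It uses Lemma \ref{groupoid_lemma} to reduce to conservativity plus automorphism groups at a single vertex $p$, and then works on $S^1\times U$ rather than $(\Delta^1\times U,\del\Delta^1\times U)$: an automorphism $\rho$ of $p^*\Ecal G$ is turned into a torsor on $S^1\times U$ by the mapping-torus (coequalizer) construction, which the \emph{absolute} Lemma \ref{bg_universality} classifies by a free loop $f$ based at some $p'$; a choice of edge $g$ from $p$ to $p'$ then conjugates $f$ to a loop at $p$ with $\Phi([g]^{-1}[f][g])=\rho$. Injectivity is likewise handled by observing that a loop $e$ at $p$ with $\Phi(e)=\id$ classifies a torsor on $S^1\times U$ pulled back from $U$, hence is freely null-homotopic, hence trivial in $\pi_1$. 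If you want to keep your more symmetric ``fully faithful on all hom-sets'' formulation, you must either prove the relative classification or reduce to the based-loop case by composing $e$ with the inverse of $e'$, at which point you are back to the paper's argument.
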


\begin{proof}
	We check the conditions in Lemma \ref{groupoid_lemma}.
	(i) is an immediate consequence of Lemma \ref{bg_universality}.
	Take any vertex $p\in \Bcal G(U)_0$.
	If $e\in \Bcal G(U)_1$ is an edge from $p$ to $p$ such that $\Phi(e)=\id_{p^*\Ecal G}$, then the pullback of $\Ecal G$ by $e\colon S^1\times U\to \Bcal G$ is a trivial $G$-torsor.
	Thus there is some homotopy from $e\colon S^1\to \Bcal G(U)$ to a constant map.
	This shows that $[e]=[\id_p]$ in $\Pi_1(\Bcal G(U))$, and hence $\Aut_{\Pi_1(\Bcal G(U))}(p)\to \Aut_{\GTors_U}(p^*\Ecal G)$ is injective.
	Let $\rho\colon p^*\Ecal G\to p^*\Ecal G$ be an automorphism of $G$-torsors.
	We define $\Ycal$ to be the coequalizer of
	$$
	\xymatrix{
		 p^*\Ecal G\ar@<0.5ex>[r]^-{(0,\id)} \ar@<-0.5ex>[r]_-{(1,\rho)} &\Delta^1\times p^*\Ecal G
	}
	$$
	which is a $G$-torsor on $S^1\times U$.
	Let $f\colon S^1\to \Bcal G(U)$ be a loop classifying $\Ycal$ and $p'$ its endpoint.
	Since $p'^*\Ecal G$ is isomorphic to $p^*\Ecal G$, there is an edge $g\in \Bcal G(U)_1$ from $p$ to $p'$.
	Then $\Phi([g]^{-1}\circ [f]\circ [g])=\rho$ by construction and hence $\Aut_{\Pi_1(\Bcal G(U))}(p)\to \Aut_{\GTors_U}(p^*\Ecal G)$ is surjective.
\end{proof}

We obtain an extension of \cite[Section 4 Proposition 1.16]{MV99}:

\begin{corollary}\label{MV1.16}
	Let $G$ be a sheaf of groups on $\Sm_S$ and $U\in \Sm_S$.
	Then there is a canonical homotopy equivalence $\Bcal G(U)\to \mathrm{N}(\GTors_U)$ sending $p\in \Bcal G(U)_0$ to $p^*\Ecal G$.
	In particular, the homotopy groups of $\Bcal G(U)$ are given by
	$$
		\pi_n(\Bcal G(U),p)\cong\begin{cases}
			H^1(U,G)		&(n=0)\\
			\Aut_{\GTors_U}(p^*\Ecal G)		&(n=1)\\
			0			&(n\geq 2).
		\end{cases}
	$$
\end{corollary}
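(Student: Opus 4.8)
The plan is to deduce the corollary formally from what has already been established just above: the homotopy equivalence $\Bcal G(U)\to \mathrm{N}(\Pi_1(\Bcal G(U)))$ coming from Lemma \ref{truncated}, and the equivalence of groupoids $\Phi\colon \Pi_1(\Bcal G(U))\to \GTors_U$ from the preceding lemma.

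First I would recall the relevant general facts. The simplicial set $\Bcal G(U)$ is a Kan complex (since $\Bcal G$ is fibrant) with $\pi_n(\Bcal G(U),p)=0$ for all $n\geq 2$ and all vertices $p$ by Lemma \ref{truncated}; such a Kan complex is a $1$-type, so the unit map $\Bcal G(U)\to \mathrm{N}(\Pi_1(\Bcal G(U)))$ of the adjunction between the fundamental groupoid and the nerve is a homotopy equivalence (as already noted). Next, the nerve functor sends an equivalence of groupoids to a homotopy equivalence of simplicial sets, so the preceding lemma gives that $\mathrm{N}(\Phi)\colon \mathrm{N}(\Pi_1(\Bcal G(U)))\to \mathrm{N}(\GTors_U)$ is a homotopy equivalence. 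Composing yields the asserted map
$$
\Bcal G(U)\longrightarrow \mathrm{N}(\Pi_1(\Bcal G(U)))\longrightarrow \mathrm{N}(\GTors_U),
$$
which is canonical once $\Bcal G$ and $\Ecal G$ are fixed and which on $0$-simplices sends $p$ to $\Phi(p)=p^*\Ecal G$ by the construction of $\Phi$.

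Then I would read off the homotopy groups from the standard description of the nerve of a small groupoid $\Ccal$: $\pi_0\mathrm{N}(\Ccal)$ is the set of isomorphism classes of objects, $\pi_1(\mathrm{N}(\Ccal),c)\cong \Aut_\Ccal(c)$, and $\pi_n(\mathrm{N}(\Ccal),c)=0$ for $n\geq 2$. Transporting along the homotopy equivalence above with $\Ccal=\GTors_U$ gives $\pi_1(\Bcal G(U),p)\cong \Aut_{\GTors_U}(p^*\Ecal G)$ and $\pi_n(\Bcal G(U),p)=0$ for $n\geq 2$ (the latter also being just Lemma \ref{truncated}). For $\pi_0$, the set of isomorphism classes of $G$-torsors on $U$ is by definition $P(U,G)$, which is the (nonabelian) Nisnevich cohomology set $H^1(U,G)$; this is also consistent with Lemma \ref{bg_universality} applied to $\Xcal=U$, which identifies $\pi_0\Bcal G(U)=\Hom_{\Hcal_s(S)}(U,\Bcal G)\cong P(U,G)$.

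I do not expect a real obstacle here: all of the substantive work has been carried out in Lemma \ref{truncated}, Lemma \ref{bg_universality}, and the two intervening lemmas, and the corollary is essentially a matter of bookkeeping. The only points deserving a word of care are the standard facts that a Kan complex with vanishing higher homotopy is recovered up to homotopy by its fundamental groupoid and that the nerve of an equivalence of groupoids is a homotopy equivalence, together with the observation that, for a sheaf of groups $G$, ``$H^1(U,G)$'' is understood to mean precisely the pointed set of isomorphism classes of $G$-torsors on $U$, so that the $\pi_0$-computation unwinds from the definitions rather than requiring an argument.
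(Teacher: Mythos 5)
Your proposal is correct and follows exactly the route the paper intends: the corollary is stated without a separate proof precisely because it is the composite of the homotopy equivalence $\Bcal G(U)\to \mathrm{N}(\Pi_1(\Bcal G(U)))$ (from Lemma \ref{truncated}) with the nerve of the groupoid equivalence $\Phi$, followed by reading off the homotopy groups of the nerve of a groupoid. Your bookkeeping of $\pi_0$ via $P(U,G)=H^1(U,G)$ and of $\pi_1$ via automorphism groups matches the paper's intended argument.
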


We say that a sheaf of groups $G$ on $\Sm_S$ is \textit{strongly $\A$-invariant} if $G$ and $H^1(-,G)$ are $\A$-invariant.
An object $\Xcal\in \sSh(\Sm_S)$ is said to be \textit{$\A$-local} if the map
$$
	\Hom_{\Hcal_s(S)}(\Ycal,\Xcal)\to \Hom_{\Hcal_s(S)}(\Ycal\times \A,\Xcal)
$$
induced by the canonical projection is bijective for every $\Ycal\in \sSh(\Sm_S)$.

\begin{theorem}\label{bga1local}
	Let $G$ be a strongly $\A$-invariant sheaf of groups on $\Sm_S$.
	Then $\Bcal G$ is $\A$-local.
\end{theorem}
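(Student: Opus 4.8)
The plan is to reduce the $\A$-locality of $\Bcal G$ first to a statement about the simplicial sets $\Bcal G(U)$ and the restriction maps between them, using Corollary \ref{MV1.16}, and then to a purely group-theoretic statement about twisted forms of $G$. First I would use the standard criterion for $\A$-locality. Since $\A$ is representable, for any fibrant $\Xcal\in\sSh(\Sm_S)$ the internal mapping object $\calHom(\A,\Xcal)$ is again fibrant, and there is a natural isomorphism $\calHom(\A,\Xcal)(U)\cong\Xcal(\Abb^1_U)$ for $U\in\Sm_S$ under which the canonical map $c\colon\Xcal\to\calHom(\A,\Xcal)$ (adjoint to the projection $\Xcal\times\A\to\Xcal$) becomes, on sections over $U$, the restriction map $\pr^*\colon\Xcal(U)\to\Xcal(\Abb^1_U)$ along the projection $\Abb^1_U\to U$. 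If $c$ is a sectionwise weak equivalence, then it is a weak equivalence of $\sSh(\Sm_S)$ between fibrant objects, hence a simplicial homotopy equivalence since every object is cofibrant; applying $\mathrm{Map}(\Ycal,-)$ and passing to $\pi_0$ then shows that $\Xcal$ is $\A$-local, because the resulting bijection $\Hom_{\Hcal_s(S)}(\Ycal,\Xcal)\cong\Hom_{\Hcal_s(S)}(\Ycal,\calHom(\A,\Xcal))=\Hom_{\Hcal_s(S)}(\Ycal\times\A,\Xcal)$ is the one induced by the projection (compare \cite[Section 2]{MV99}). Since $\Bcal G$ is fibrant by Lemma \ref{bg_universality}, it therefore suffices to show that $\pr^*\colon\Bcal G(U)\to\Bcal G(\Abb^1_U)$ is a weak equivalence of simplicial sets for every $U\in\Sm_S$.

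Next I would invoke Corollary \ref{MV1.16}. The equivalence $\Bcal G(U)\to\mathrm{N}(\GTors_U)$ constructed there is natural in $U$ (the functor $\Phi$ is built from pullbacks of $\Ecal G$), so up to homotopy the map $\pr^*\colon\Bcal G(U)\to\Bcal G(\Abb^1_U)$ is identified with the nerve of the pullback functor $\pr^*\colon\GTors_U\to\GTors_{\Abb^1_U}$. As the nerve of a groupoid is a homotopy $1$-type and the nerve sends equivalences of groupoids to weak equivalences, it is enough to prove that this pullback functor is an equivalence of groupoids, which I would check using Lemma \ref{groupoid_lemma}. Conservativity is automatic, and essential surjectivity is exactly the surjectivity of $\pi_0\GTors_U=H^1(U,G)\to H^1(\Abb^1_U,G)=\pi_0\GTors_{\Abb^1_U}$, which holds because $H^1(-,G)$ is $\A$-invariant. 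What remains is to show that $\pr^*$ induces an isomorphism $\Aut_{\GTors_U}(P)\to\Aut_{\GTors_{\Abb^1_U}}(\pr^*P)$ for every $G$-torsor $P$ over $U$; this is the heart of the matter.

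For this last step I would argue as follows. The presheaf $V\mapsto\Aut_{\GTors_V}(P|_V)$ on $\Sm_U$ is a Nisnevich sheaf of groups, namely the twisted form $\Aut_G(P)$ of $G$ attached to $P$, and the map to be analysed is the restriction map $\Aut_G(P)(U)\to\Aut_G(P)(\Abb^1_U)$. Choose a Nisnevich cover $\{W_i\to U\}$ by schemes étale over $U$, hence objects of $\Sm_S$, over which $P$ becomes trivial; then $\Aut_G(P)|_{W_i}\cong G|_{W_i}$, and likewise over the double intersections $W_i\times_U W_j\in\Sm_S$. Since $G$ is $\A$-invariant on $\Sm_S$, the maps $\Aut_G(P)(W_i)=G(W_i)\to G(\Abb^1_{W_i})=\Aut_G(P)(\Abb^1_{W_i})$ and their analogues over the $W_i\times_U W_j$ are bijective. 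Applying Nisnevich descent for $\Aut_G(P)$ along $\{W_i\to U\}$ and along its base change $\{\Abb^1_{W_i}\to\Abb^1_U\}$ then exhibits $\Aut_G(P)(U)\to\Aut_G(P)(\Abb^1_U)$ as an equalizer of isomorphisms, hence an isomorphism, which finishes the argument. I expect this last step---namely that a twisted form of a strongly $\A$-invariant sheaf of groups is again $\A$-invariant---to be the only genuinely delicate point; the argument above rests on the observation that $\A$-invariance, being a property of sections, can be checked Nisnevich-locally on the base.
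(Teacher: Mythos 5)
Your proposal is correct and follows essentially the same route as the paper: reduce $\A$-locality of the fibrant object $\Bcal G$ to the sectionwise statement that $\Bcal G(U)\to\Bcal G(\Abb^1_U)$ is a weak equivalence, identify the homotopy type via Corollary \ref{MV1.16} so that $\pi_0$ and $\pi_{\geq 2}$ are handled by the $\A$-invariance of $H^1(-,G)$, and settle $\pi_1$ by showing $\Aut_{\GTors_U}(P)\to\Aut_{\GTors_{\Abb^1_U}}(\pr^*P)$ is bijective. Your descent argument for this last step (the twisted form $\Aut_G(P)$ is $\A$-invariant because it is Nisnevich-locally isomorphic to $G$) is exactly the paper's explicit cocycle computation $\Aut_{\GTors_U}(p^*\Ecal G)\cong\{g\in G(V)\mid h(\pr_1^*g)=(\pr_2^*g)h\}$, just phrased as an equalizer of isomorphisms rather than written out.
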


\begin{proof}
	By \cite[Section 2 Proposition 3.19]{MV99} it suffices to show that $\pi_n(\Bcal G(U),p)\to \pi_n(\Bcal G(\Abb^1_U),r^*p)$ is bijective for any $U\in \Sm_S$ and $p\in \Bcal G(U)_0$, where $r\colon \Abb^1_U\to U$ is the canonical projection.
	The cases $n=0$ and $n\geq 2$ follows from Corollary \ref{MV1.16} and the $\A$-invariance of $H^1(-,G)$.
	It remains to show that $\Aut_{\GTors_U}(p^*\Ecal G)\to \Aut_{\GTors_{\Abb^1_U}}(r^*p^*\Ecal G)$ is an isomorphism.
	Take a Nisnevich covering $q\colon V\to U$ such that $q^*p^*\Ecal G$ is a trivial $G$-torsor.
	Let $\varphi\colon \pr_1^*q^*p^*\Ecal G\xrightarrow{\cong}\pr_2^*q^*p^*\Ecal G$ be the descent data for $\Ecal G$; it is an isomorphism of $G$-torsors on $V\times_UV$.
	Now a choice of an isomorphism $\sigma\colon q^*p^*\Ecal G\cong V\times G$ enables us to identify $\Aut_{\GTors_V}(q^*p^*\Ecal G)$ with $G(V)$ and $\varphi$ with an element $h\in G(V\times_UV)$.
	By Nisnevich descent for $G$-torsors, we obtain an isomorphism
	$$
		\Aut_{\GTors_U}(p^*\Ecal G)\cong \{g\in G(V)\mid h(\pr_1^*g)=(\pr_2^*g)h\}.
	$$
	On the other hand, $\sigma$ also induces an identification $q'^*r^*p^*\Ecal G\cong \Abb^1_V\times G$ where $q'\colon \Abb^1_V\to \Abb^1_U$ is the morphism induced by $q$.
	We thus get a similar isomorphism
	$$
		\Aut_{\GTors_{\Abb^1_U}}(r^*p^*\Ecal G)\cong \{g\in G(\Abb^1_V)\mid h'(\pr_1^*g)=(\pr_2^*g)h'\}.
	$$
	where $h'$ is the image of $h$ in $G(\Abb^1_V\times_{\Abb^1_U}\Abb^1_V)$.
	The morphism under consideration is identified with the morphism
	$$
		\{g\in G(V)\mid h(\pr_1^*g)=(\pr_2^*g)h\}\to \{g\in G(\Abb^1_V)\mid h'(\pr_1^*g)=(\pr_2^*g)h'\}
	$$
	induced by the canonical projection.
	Since $G$ is $\A$-invariant, this is an isomorphism.
\end{proof}

We now return to the theory over a field $k$.

\begin{corollary}\label{a1_unr}
	Any strongly $\A$-invariant sheaf of groups on $\Sm_k$ is unramified.
\end{corollary}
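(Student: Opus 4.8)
This essentially re-derives a theorem of Morel, organized so that the $\A$-locality of $\Bcal G$ carries the homotopical part. The plan is to verify the two conditions in the definition of an unramified sheaf directly. First I would reformulate them: fix $X\in\Sm_k^\conn$. Condition (i), injectivity of $G(X)\to G(k(X))=\colim_{\emptyset\neq U\subseteq X}G(U)$, is equivalent to $\ker(G(X)\to G(X\setminus Z))=\{e\}$ for every nonempty closed $Z\subsetneq X$. Granting (i), condition (ii) reduces to surjectivity of $G(X)\to G(X\setminus Z)$ whenever $\mathrm{codim}_X Z\geq 2$: if $s\in\bigcap_{x\in X^{(1)}}G(\Ocal_{X,x})\subseteq G(k(X))$, then $s$ extends over an open neighbourhood $U_x$ of each codimension-one point $x$, these extensions agree on overlaps by (i), so they glue to a section over $\bigcup_x U_x$, whose complement has codimension $\geq 2$. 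Writing $H^0_Z(X,G):=\ker(G(X)\to G(X\setminus Z))$ and letting $H^1_Z(X,G)$ denote the pointed set occurring in the localization sequence $G(X)\to G(X\setminus Z)\to H^1_Z(X,G)\to H^1(X,G)\to H^1(X\setminus Z,G)$, it then suffices to prove $H^0_Z(X,G)=\{e\}$ when $\mathrm{codim}_X Z\geq 1$ and $H^1_Z(X,G)=\{*\}$ when $\mathrm{codim}_X Z\geq 2$.

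For this I would invoke a connectivity statement: if $Z$ has codimension $c$ in the smooth $k$-scheme $X$, then the quotient $X/(X\setminus Z)$ is $(c-1)$-$\A$-connected. When $Z$ is smooth this follows from the homotopy purity theorem of \cite{MV99}, since $X/(X\setminus Z)$ is then the Thom space of the rank-$c$ normal bundle, which is a simplicial suspension of $\Abb^c\setminus\{0\}$ smashed with $Z$ while $\Abb^c\setminus\{0\}$ is $(c-2)$-$\A$-connected; the general case follows by the usual reduction to the smooth case via Gabber's geometric presentation lemma and Nisnevich excision, after noetherian induction on $Z$. Granting this, homotopy purity computes $H^i_Z(X,G)$ for $i\leq 1$ by pointed maps from $X/(X\setminus Z)$ into $\Bcal G$; since $\Bcal G$ is $\A$-local (Theorem \ref{bga1local}) these may be taken in the $\A$-homotopy category, and since $\Bcal G$ is connected, $1$-truncated, and has $\pi_1=G$ (Lemma \ref{truncated} and Corollary \ref{MV1.16}), every pointed map into $\Bcal G$ out of an $\A$-$1$-connected space is trivial. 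This yields $H^0_Z(X,G)=\{e\}$ once $c\geq 1$ (the relevant space is a further simplicial suspension of the $(c-1)$-$\A$-connected quotient, hence $\A$-$1$-connected) and $H^1_Z(X,G)=\{*\}$ once $c\geq 2$ (the quotient itself is then $\A$-$1$-connected).

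The main obstacle is the geometric reduction. Homotopy purity together with Gabber's presentation lemma and Nisnevich excision are what make the connectivity statement above apply to an arbitrary, possibly singular, closed subset $Z$; over an imperfect or finite base field one cannot bypass this by stratifying $Z$ into smooth strata or by resolution of singularities, so the presentation lemma --- and its validity in that generality --- is where the real content sits. Everything else --- the two reductions above, the localization sequence for a sheaf of possibly non-abelian groups, homotopy purity, the connectivity of motivic Thom spheres, and the triviality of maps out of $\A$-$1$-connected spaces into $\Bcal G$ --- is routine once $\Bcal G$ is known to be $\A$-local.
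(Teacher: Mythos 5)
Your proposal is correct and takes essentially the same route as the paper: the paper's proof consists of invoking Theorem \ref{bga1local} for the $\A$-locality of $\Bcal G$ and then citing the proof of \cite[Theorem 6.1]{Mor} for the fact that $G=\pi_1^\A(\Bcal G,\ast)$ is then unramified, and what you have written is precisely an unpacking of that cited argument (the localization sequence for maps into $\Bcal G$, the $\A$-connectivity of $X/(X\setminus Z)$ via homotopy purity, and Gabber's presentation lemma to handle singular $Z$). So there is no gap, but also no genuinely different idea beyond what the paper delegates to Morel.
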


\begin{proof}
	If $G$ is a strongly $\A$-invariant sheaf of groups on $\Sm_k$, then $\Bcal G$ is $\A$-local by Lemma \ref{bga1local}.
	Hence by the proof of \cite[Theorem 6.1]{Mor}, $G=\pi_1^\A(\Bcal G,\ast)$ is unramified.
\end{proof}

\section{Main result}

We recall the definition of $\A$-homology sheaves.
A sheaf $M$ of abelian groups on $\Sm_k$ is called \textit{strictly $\A$-invariant} if its cohomology presheaves $H^n(-,M)~(n\geq 0)$ are all $\A$-invariant.
Let $\Ab^\A(\Sm_k)$ denote the full subcategory of $\Ab(\Sm_k)$ consisting of strictly $\A$-invariant sheaves.
Let $\D(\Sm_k)$ denote the unbounded derived category of $\Ab(\Sm_k)$.
An object $C\in \D(\Sm_k)$ is called $\A$-local if for any $D\in \D(\Sm_k)$ the map
$$
	\Hom_{\D(\Sm_k)}(D,C)\to \Hom_{\D(\Sm_k)}(D\otimes \Zbb(\A),C)
$$
is bijective.
For example, a sheaf of abelian groups (viewed as an object of $\D(\Sm_k)$) is $\A$-local if and only if it is strictly $\A$-invariant.
Morel's $\A$-derived category $\D_\A(k)$ is the full triangulated subcategory of $\D(\Sm_k)$ consisting of $\A$-local complexes.
The inclusion functor $\D_\A(k)\to \D(\Sm_k)$ admits a left adjoint $L_\A$ (see \cite[Corollary 6.19]{Mor}).
We say that $C\in \D(\Sm_k)$ is \textit{$(-1)$-connected} if $H_nC=0$ for all $n<0$.

\begin{theorem}\label{connectivity}
	If $C\in \D(\Sm_k)$ is $(-1)$-connected, then $L_\A C$ is also $(-1)$-connected.
\end{theorem}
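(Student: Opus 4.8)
The plan is to reduce the statement to a concrete statement about the explicit construction of $L_\A$ as a transfinite iteration of a "naive" $\A$-localization functor, following Morel's approach in \cite[Section 6]{Mor} (or equivalently Morel-Voevodsky's singular construction). Recall that $L_\A$ is built by iterating the endofunctor $C\mapsto \underline{\mathrm{Sing}}^\A_*(C)$, the total complex of the simplicial object $n\mapsto C^{\Delta^n_{\mathrm{alg}}}$ where $\Delta^n_{\mathrm{alg}}$ is the algebraic $n$-simplex, and then taking the colimit over a suitable ordinal together with a fibrant (hypercohomology) replacement, finitely many times. Since a filtered colimit of $(-1)$-connected complexes is $(-1)$-connected, and since passing to a Nisnevich-local replacement preserves $(-1)$-connectedness (Nisnevich cohomological descent does not create negative homotopy sheaves because the site has finite cohomological dimension in each bounded range of degrees — more precisely, $H_n$ of the local replacement is the Nisnevich sheafification of $H_n$ of the presheaf-level complex, which vanishes for $n<0$), the whole problem comes down to a single step: if $C$ is $(-1)$-connected, then so is $\underline{\mathrm{Sing}}^\A_*(C)$.

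The key step, then, is the following. For a $(-1)$-connected complex $C$, I want $H_n(\underline{\mathrm{Sing}}^\A_*(C)) = 0$ for $n<0$. There is a spectral sequence (the one associated to the simplicial filtration) with $E_1^{p,q}$ a subquotient of $H_q(C^{\Delta^p_{\mathrm{alg}}})$ converging to $H_{q-p}(\underline{\mathrm{Sing}}^\A_*(C))$; here I write homological indexing so $C^{\Delta^p_{\mathrm{alg}}} = \underline{\mathcal{H}om}(\Zbb(\Delta^p_{\mathrm{alg}}), C)$. The complex $\Zbb(\Delta^p_{\mathrm{alg}})$ is the free sheaf on an affine space $\Abb^p$, hence a projective object concentrated in homological degree $0$, so $H_q(C^{\Delta^p_{\mathrm{alg}}})$ is the internal-Hom applied degreewise and its homology is controlled by that of $C$. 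Because $\Zbb(\Abb^p)$ is a direct summand of a direct sum of representables, $\underline{\mathcal{H}om}(\Zbb(\Abb^p), -)$ is exact and sends $(-1)$-connected complexes to $(-1)$-connected complexes (it is just a product of evaluations on the schemes $\Abb^p_U$). Therefore each column $E_1^{p,\bullet}$ is concentrated in $q\geq 0$, so $E_1^{p,q}=0$ whenever $q-p<-p$, and in particular $E_\infty^{p,q}=0$ for $q-p<0$ once one checks the diagonal $q-p=n<0$ receives only zero contributions — which it does, since $q\geq 0$ and $p\geq 0$ force $q-p\geq -p$, and we need the total degree $n = q-p$; but the abutment in total degree $n$ is filtered with graded pieces $E_\infty^{p,\,n+p}$ for $p\geq 0$, each a subquotient of $H_{n+p}(C^{\Delta^p_{\mathrm{alg}}})$, which vanishes as soon as $n+p<0$. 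For $n<0$ this holds for $p=0$, but for $p=-n,\,-n+1,\dots$ it need not — so the naive bound is not quite enough and one instead argues that the $p=0$ row already computes $H_n$ in this range because the differentials out of and into it from lower rows are zero (there are no rows below $q=0$), giving $H_n(\underline{\mathrm{Sing}}^\A_*(C)) = $ a subquotient of $H_0(C^{\Delta^{-n}}) $ only through the column $p=-n$, and that column contributes $H_0(C^{\Delta^{-n}_{\mathrm{alg}}})$ which is generally nonzero.

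Let me restructure the argument to avoid that subtlety: instead of the simplicial-degree spectral sequence I will use the \emph{homology} (hypercohomology) spectral sequence, or more simply the following hands-on observation. The complex $\underline{\mathrm{Sing}}^\A_*(C)$ in homological degree $n$ is $\bigoplus_{p-q=-n} C_q^{\Delta^p_{\mathrm{alg}}}$... rather than fight the bookkeeping, the cleanest route is: truncation. Write $\tau_{\leq -1}$ for the bad part. Since $\underline{\mathrm{Sing}}^\A_*$ commutes with the homological truncation functors up to the relevant connectivity — because it is computed by an exact functor applied levelwise in the simplicial direction followed by taking a totalization, and totalization of a bicomplex concentrated in nonnegative one direction cannot create homology below the minimum — one has $H_n \underline{\mathrm{Sing}}^\A_*(C) = H_n \underline{\mathrm{Sing}}^\A_*(\tau_{\geq 0}C) $ for $n < 0$, but $\tau_{\geq 0}C = C$ by hypothesis; and $\underline{\mathrm{Sing}}^\A_*$ of a $(-1)$-connected complex is again $(-1)$-connected because the normalized chain complex of a simplicial object in $(-1)$-connected complexes, totalized, is $(-1)$-connected (the associated bicomplex lives in the region $\{(p,q): p\geq 0,\ q\geq 0\}$ after choosing the right indexing, where $p$ is simplicial degree placed in nonnegative \emph{homological} degree and $q$ is the internal degree, also nonnegative, so the total complex lives in nonnegative homological degrees). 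This last sentence is the real content, and it is a standard fact about the Dold–Kan/Moore complex: a simplicial object in connective complexes has a connective totalization.

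I expect the main obstacle to be purely organizational rather than mathematical: pinning down precisely which model of $L_\A$ is being used (Morel's $\D_\A(k)$ as in \cite[Corollary 6.19]{Mor}) and verifying that (a) the naive singular functor preserves connectivity, which is the Dold–Kan fact above, and (b) the Nisnevich-local/fibrant replacement that interleaves with the singular functor preserves connectivity, which follows because $H_n$ of a local replacement is the sheafification of the $n$-th homology presheaf and sheafification is exact, hence kills nothing and creates nothing in negative degrees. Once both stability properties are in hand, $(-1)$-connectedness of $L_\A C$ follows since $L_\A C$ is obtained from $C$ by a (possibly transfinite) composite of these two operations, and a filtered colimit of $(-1)$-connected complexes is $(-1)$-connected because filtered colimits are exact in $\Ab(\Sm_k)$.
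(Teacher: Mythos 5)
The paper does not actually prove this statement: it is Morel's stable $\A$-connectivity theorem, and the proof given in the paper is just the citation to \cite[Theorem 6.22]{Mor}. Your proposal attempts a genuine proof, and it has a gap at exactly the point where the real difficulty of that theorem sits, namely your step (b). The claim that the Nisnevich-local (fibrant) replacement preserves $(-1)$-connectedness ``because $H_n$ of the local replacement is the sheafification of the $n$-th homology presheaf'' is false in general. The homology sheaves of a local replacement of a presheaf-level complex are governed by a descent spectral sequence whose $E_2$-page involves $H^p_{\Nis}(U, a_{\Nis}H_q)$ (with $a_{\Nis}$ denoting sheafification) contributing to homology in total degree $q-p$; since the objects $U$ of $\Sm_k$ have unbounded dimension, the terms with $p>q\geq 0$ can a priori create nonzero homology in negative degrees. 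Ruling this out is precisely the content of Morel's theorem: one must show that the homology presheaves produced by the singular construction sheafify to objects with no Nisnevich cohomology in the dangerous range, and this requires the machinery of $\A$-invariant unramified sheaves and Gersten-type resolutions, using the hypothesis that the base is a field in an essential way.

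The decisive sanity check is that your argument nowhere uses that the base is a field, so if it were correct it would establish the connectivity theorem over any noetherian base of finite Krull dimension; Ayoub has constructed counterexamples to exactly that statement over bases of dimension at least $2$. The other ingredients of your proposal are fine: the levelwise singular construction does preserve connectivity (totalization of a bicomplex concentrated in the first quadrant), and filtered colimits of $(-1)$-connected complexes are $(-1)$-connected since filtered colimits are exact in $\Ab(\Sm_k)$. But the local-replacement step cannot be repaired without importing the full strength of Morel's argument, so the appropriate move here is the one the paper makes: quote \cite[Theorem 6.22]{Mor}.
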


\begin{proof}
	See \cite[Theorem 6.22]{Mor}.
\end{proof}

\begin{corollary}
	An object $C\in \D(\Sm_k)$ is $\A$-local if and only if its homology sheaves are all strictly $\A$-invariant.
\end{corollary}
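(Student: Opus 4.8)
The plan is to work with the characterization that an object $C$ is $\A$-local if and only if the projection $\Abb^1_U\to U$ induces an isomorphism $\mathbb{H}^n_\Nis(U,C)\cong\mathbb{H}^n_\Nis(\Abb^1_U,C)$ for every $U\in\Sm_k$ and every $n\in\Zbb$; this holds because the objects $\Zbb(U)$ generate $\D(\Sm_k)$ as a localizing subcategory and $\Zbb(U)\otimes\Zbb(\A)\cong\Zbb(\Abb^1_U)$. For the ``if'' direction, assume every $H_nC$ is strictly $\A$-invariant. Then for each $U$ I would compare the hypercohomology spectral sequences $E_2^{p,q}=H^p_\Nis(U,H_{-q}C)\Rightarrow\mathbb{H}^{p+q}_\Nis(U,C)$ and the corresponding one for $\Abb^1_U$: both converge, since every smooth $k$-scheme has finite Nisnevich cohomological dimension, and the map of spectral sequences induced by the projection is an isomorphism on $E_2$ by strict $\A$-invariance; hence it is an isomorphism on the abutments, so $C$ is $\A$-local.

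The ``only if'' direction I would deduce from the following special case: \emph{if $D$ is $\A$-local and $(-1)$-connected, then $H_0(D)$ is strictly $\A$-invariant}. Granting it, the truncation triangle $\tau_{\ge1}D\to D\to H_0(D)[0]$ presents $\tau_{\ge1}D$ as the fibre of a morphism between $\A$-local objects (using the special case together with the stated fact about sheaves), so $\tau_{\ge1}D$ is $\A$-local; an induction on $n$ — at each step shifting $(\tau_{\ge n}D)[-n]$ to reduce to the special case — then shows that all $H_nD$ are strictly $\A$-invariant whenever $D$ is $\A$-local and $(-1)$-connected. Applying this to $\mathrm{cofib}(f)$ for a morphism $f\colon M[0]\to M'[0]$ of strictly $\A$-invariant sheaves — a $(-1)$-connected $\A$-local complex with homology $\mathrm{coker}\,f$ in degree $0$ and $\ker f$ in degree $1$ — shows that strictly $\A$-invariant sheaves are stable under kernels and cokernels in $\Ab(\Sm_k)$; they are also stable under extensions (five lemma on the cohomology long exact sequence) and arbitrary direct sums (Nisnevich cohomology commutes with filtered colimits). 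Hence the full subcategory of $\D(\Sm_k)$ consisting of complexes with strictly $\A$-invariant homology is localizing; by the ``if'' direction it is contained in $\D_\A(k)$, and by Theorem \ref{connectivity} and the special case it contains every $L_\A\Zbb(U)$. Since $L_\A$ preserves coproducts and exact triangles, $\D_\A(k)$ is generated as a localizing subcategory by the $L_\A\Zbb(U)$, so the two subcategories coincide, which is exactly the claim.

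It remains to establish the special case, which is where I expect the real work to lie: because $L_\A$ is not $t$-exact — the $\A$-localization of a sheaf can acquire nonzero higher homology — one cannot simply truncate, and I would argue through retracts instead. Put $M=H_0(D)$, $N=L_\A(M[0])$, and let $\eta\colon M[0]\to N$ be the unit; by Theorem \ref{connectivity}, $N$ is $(-1)$-connected. Applying $L_\A$ to the triangle $\tau_{\ge1}D\to D\to M[0]$, and using $L_\A D=D$ together with the fact that $L_\A\tau_{\ge1}D$ is $0$-connected (Theorem \ref{connectivity} again), one reads off from the homology long exact sequence that $H_0(\eta)$ is an isomorphism. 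A short homology computation then shows that the composite $\tau_{\ge1}N\to N\to\mathrm{cofib}(\eta)$ is a quasi-isomorphism, and $\mathrm{cofib}(\eta)$ is killed by $L_\A$ since $L_\A(\eta)$ is invertible; therefore in the truncation triangle $\tau_{\ge1}N\to N\to M[0]$ the first term $\tau_{\ge1}N$ is annihilated by $\Hom_{\D(\Sm_k)}(-,E)$ for every $\A$-local $E$, so the map $\pi\colon N\to M[0]$ induces a bijection $\Hom_{\D(\Sm_k)}(M[0],E)\to\Hom_{\D(\Sm_k)}(N,E)$. Composing with the adjunction isomorphism $\Hom_{\D(\Sm_k)}(N,E)\cong\Hom_{\D(\Sm_k)}(M[0],E)$ given by precomposition with $\eta$, and specializing to $E=N$, shows that right composition with $\eta\circ\pi$ is a bijection of $\Hom_{\D(\Sm_k)}(N,N)$, which forces $\eta\circ\pi$ to be an isomorphism; hence $\pi$ is a split monomorphism and $N$ is a direct summand of $M[0]$. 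Since $M[0]$ is concentrated in degree $0$, so is $N$, and then $H_0(\eta)$ being an isomorphism forces $\eta$ to be an isomorphism — that is, $M[0]$ is $\A$-local and $M$ is strictly $\A$-invariant.

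Apart from the retract argument, the only essential inputs are Theorem \ref{connectivity} (used twice) and the formal properties of the localization $L_\A$; the remaining delicate point is the convergence of the hypercohomology spectral sequences for possibly unbounded $C$, which is exactly where finite Nisnevich cohomological dimension is needed.
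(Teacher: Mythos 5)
Your proof is correct, but the ``only if'' direction takes a genuinely different and considerably longer route than the paper's. The paper applies Theorem \ref{connectivity} directly to the truncations of an arbitrary $\A$-local $C$: since $L_\A\tau_{\geq n}C$ is $(n-1)$-connected, the canonical morphism $L_\A\tau_{\geq n}C\to L_\A C\cong C$ factors through $\tau_{\geq n}C$, exhibiting $\tau_{\geq n}C$ as a retract of the $\A$-local object $L_\A\tau_{\geq n}C$; hence every truncation is $\A$-local, and $H_nC[n]$, being the cone of $\tau_{\geq n+1}C\to\tau_{\geq n}C$, is $\A$-local as well. This disposes of unbounded complexes in one stroke. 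You instead prove the key special case at the level of $H_0$ of a $(-1)$-connected $\A$-local object, by showing that $L_\A(M[0])$ is a retract of $M[0]$ --- a mirror image of the paper's retract trick (the paper realizes the truncation as a retract of its localization; you realize the localization as a retract of the truncation) --- and you then need two further layers: an induction on truncations for $(-1)$-connected objects, and a generation argument (strictly $\A$-invariant sheaves are closed under kernels, cokernels, extensions and direct sums, so the complexes with strictly $\A$-invariant homology form a localizing subcategory containing the generators $L_\A\Zbb(U)$ of $\D_\A(k)$) to reach unbounded $C$. All of these steps check out, including the delicate point that a bijective $(-\circ\eta\circ\pi)$ forces $\eta\circ\pi$ to be invertible and the use of finite Nisnevich cohomological dimension for convergence of the hypercohomology spectral sequences. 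What your detour buys is a fully spelled-out ``if'' direction (which the paper dismisses as clear) and, as a by-product, the fact that strictly $\A$-invariant sheaves form an abelian subcategory of $\Ab(\Sm_k)$ closed under colimits; the cost is substantially more machinery than the paper's three-line factorization argument.
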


\begin{proof}
	This is well-known, but we recall the proof for the convinience of readers.
	The `if' part is clear.
	To prove the `only if' part, it suffices to show that if $C\in \D(\Sm_k)$ is $\A$-local then its truncation $\tau_{\geq n}C$ is also $\A$-local for each $n\in \Zbb$.
	By Theorem \ref{connectivity}, $L_\A\tau_{\geq n} C$ is $(n-1)$-connected.
	Thus the canonical morphism $L_\A\tau_{\geq n} C\to L_\A C\cong C$ factors through $\tau_{\geq n}C$.
	Since the composition $\tau_{\geq n}C\to L_\A\tau_{\geq n} C\to \tau_{\geq n}C$ is the identity, it follows that $\tau_{\geq n}C$ is $\A$-local.
\end{proof}

This implies that the canonical t-structure on $\D(\Sm_k)$ restricts to a t-structure on $\D_\A(k)$ whose heart is $\Ab^\A(\Sm_k)$.
The \textit{$n$-th $\A$-homology sheaf} $H_n^\A(X)$ of $X\in \Sm_k$ is defined to be the $n$-th homology sheaf of $L_\A\Zbb(X)$, which is strictly $\A$-invariant and vanishes for $n<0$.
There is a canonical morphism of sheaves $X\to H_0^\A(X)$.

\begin{lemma}\label{universal_property}
	Let $X\in \Sm_k$.
	Then the canonical morphism $X\to H_0^\A(X)$ is initial among morphisms from $X$ to strictly $\A$-invariant sheaves of abelian groups.
\end{lemma}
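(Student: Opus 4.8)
The statement is equivalent to saying that for every strictly $\A$-invariant sheaf of abelian groups $M$, precomposition with the canonical morphism $X\to H_0^\A(X)$ induces a bijection
$$
\Hom_{\Ab(\Sm_k)}(H_0^\A(X),M)\xrightarrow{\ \sim\ }\Hom_{\Sh(\Sm_k)}(X,M).
$$
The plan is to establish this by a chain of natural bijections, passing through $\D(\Sm_k)$. First I would reduce to sheaves of abelian groups: the free--forgetful adjunction gives $\Hom_{\Sh(\Sm_k)}(X,M)=\Hom_{\Ab(\Sm_k)}(\Zbb(X),M)$, and under this identification the canonical morphism $X\to H_0^\A(X)$ corresponds to the composite $\Zbb(X)\to L_\A\Zbb(X)\to H_0^\A(X)$, where the second map is the canonical truncation morphism $L_\A\Zbb(X)\to\tau_{\leq 0}L_\A\Zbb(X)$. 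Thus it remains to show that precomposition with this composite is a bijection $\Hom_{\Ab(\Sm_k)}(H_0^\A(X),M)\cong\Hom_{\Ab(\Sm_k)}(\Zbb(X),M)$.

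Next I would move to the derived category. For any two sheaves of abelian groups $N,M$, regarded as complexes concentrated in degree $0$, one has $\Hom_{\Ab(\Sm_k)}(N,M)=\Hom_{\D(\Sm_k)}(N,M)$. Since $M$ is strictly $\A$-invariant it is $\A$-local, so the adjunction between $L_\A$ and the inclusion $\D_\A(k)\hookrightarrow\D(\Sm_k)$, together with the fullness of this inclusion, yields a bijection $\Hom_{\D(\Sm_k)}(\Zbb(X),M)\cong\Hom_{\D(\Sm_k)}(L_\A\Zbb(X),M)$ given by precomposition with $\Zbb(X)\to L_\A\Zbb(X)$.

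The final step uses connectivity. As $\Zbb(X)$ is a sheaf it is $(-1)$-connected, hence $L_\A\Zbb(X)$ is $(-1)$-connected by Theorem \ref{connectivity}; in particular $\tau_{\leq 0}L_\A\Zbb(X)=H_0^\A(X)$, and there is a distinguished triangle
$$
\tau_{\geq 1}L_\A\Zbb(X)\to L_\A\Zbb(X)\to H_0^\A(X)\to (\tau_{\geq 1}L_\A\Zbb(X))[1].
$$
Applying $\Hom_{\D(\Sm_k)}(-,M)$ and using that $M$ is concentrated in degree $0$ while $\tau_{\geq 1}L_\A\Zbb(X)$ lives in homological degrees $\geq 1$, both terms $\Hom_{\D(\Sm_k)}(\tau_{\geq 1}L_\A\Zbb(X),M)$ and $\Hom_{\D(\Sm_k)}((\tau_{\geq 1}L_\A\Zbb(X))[1],M)$ vanish, since the relevant $\mathrm{Ext}$-groups from a complex supported in positive degrees to a sheaf in degree $0$ are zero (one sees this from the hyperext spectral sequence). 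Hence the long exact sequence gives $\Hom_{\D(\Sm_k)}(H_0^\A(X),M)\cong\Hom_{\D(\Sm_k)}(L_\A\Zbb(X),M)$ via precomposition with $L_\A\Zbb(X)\to H_0^\A(X)$. Composing the bijections of the last two paragraphs and unwinding the degree-$0$ identifications gives the desired bijection; since every intermediate map is precomposition with one of the structural morphisms $\Zbb(X)\to L_\A\Zbb(X)\to H_0^\A(X)$, the composite is precomposition with the canonical morphism $X\to H_0^\A(X)$, which simultaneously yields the existence and uniqueness of the factorization.

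I do not expect a serious obstacle here: the only point requiring care is the last compatibility assertion, namely that the adjunction bijection and the truncation bijection are indeed the ones induced by the structural morphisms, so that their composite computes precomposition with $X\to H_0^\A(X)$; this is formal diagram-chasing. The reduction to abelian sheaves, the identification of degree-$0$ $\Hom$ with $\Hom$ in $\D(\Sm_k)$, and the vanishing of the two $\mathrm{Ext}$-groups are all routine, and the one substantive input---that $L_\A$ preserves $(-1)$-connectedness---is quoted from Theorem \ref{connectivity}.
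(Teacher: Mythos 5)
Your proposal is correct and follows essentially the same route as the paper's proof: the free–forgetful adjunction, the $L_\A$-adjunction against the $\A$-local object $M$, and the truncation step justified by the $(-1)$-connectedness of $L_\A\Zbb(X)$ from Theorem \ref{connectivity}. The paper compresses your distinguished-triangle argument into the single isomorphism $\Hom_{\D_\A(k)}(L_\A\Zbb(X),M)\cong\Hom_{\D_\A(k)}(\tau_{\leq 0}L_\A\Zbb(X),M)$, but the content is identical.
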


\begin{proof}
	This is originally due to Morel, and a proof can be found in \cite[Lemma 3.3]{Aso12}.
	We recall the proof for the convinience of readers.
	For any $M\in \Ab^\A(\Sm_k)$ we have a sequence of natural isomorphisms
	\begin{align*}
		\Hom_{\Sh(\Sm_k)}(X,M)	&\cong \Hom_{\D(\Sm_k)}(\Zbb(X), M)\\
								&\cong \Hom_{\D_\A(k)}(L_\A\Zbb(X), M)\\
								&\cong \Hom_{\D_\A(k)}(\tau_{\leq 0} L_\A\Zbb(X),M).
	\end{align*}
	Since $L_\A\Zbb(X)$ is $(-1)$-connected, we have $\tau_{\leq 0} L_\A\Zbb(X)\cong H_0^\A(X)$.
	Thus the last group is isomorphic to $\Hom_{\Ab^\A(\Sm_k)}(H_0^\A(X),M)$.
\end{proof}

\begin{lemma}\label{br_str_a1}
	Any birational sheaf of abelian groups is strictly $\A$-invariant.
\end{lemma}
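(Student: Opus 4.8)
The plan is to reduce strict $\A$-invariance of a birational sheaf of abelian groups $M$ to the vanishing of its higher Nisnevich cohomology. Precisely, I will prove that $H^n_{\Nis}(X,M)=0$ for all $X\in\Sm_k$ and all $n\ge 1$. Granting this, $M$ is strictly $\A$-invariant because the map $H^n(X,M)\to H^n(\Abb^1_X,M)$ is bijective for $n=0$ by Lemma~\ref{colliot-thelene} and is a map between zero groups for $n\ge1$. Passing to connected components we may assume $X$ integral, and it is convenient to prove the vanishing for the larger class of integral regular $k$-schemes $Y$ that are essentially smooth over $k$ (so that local rings, Henselizations, and further localizations stay in the class, and $M$ and its cohomology are defined on them by continuity), by induction on $\dim Y$. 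The case $\dim Y=0$ is clear: $Y=\Spec F$ is a point of the Nisnevich topos, so $H^n_{\Nis}(Y,-)=0$ for $n\ge1$.

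For the inductive step take $\dim Y=d\ge1$, set $F=k(Y)$ and let $\iota\colon\eta=\Spec F\hookrightarrow Y$ be the inclusion of the generic point. Two facts drive the argument. First, the unit $M|_{Y_{\Nis}}\to\iota_*\iota^*M$ is an isomorphism: for étale $U\to Y$ the generic fibre is $U_\eta=\coprod_{\xi\in U^{(0)}}\Spec\kappa(\xi)$ with each $\kappa(\xi)/F$ finite separable, and birationality of $M$ identifies both $M(U)$ and $(\iota_*\iota^*M)(U)=(\iota^*M)(U_\eta)$ with $\prod_{\xi\in U^{(0)}}M(\kappa(\xi))$ compatibly with the unit. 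Second, $R^q\iota_*\iota^*M=0$ for $q\ge1$: its stalk at $y\in Y$ is $H^q_{\Nis}(\Spec R_y,M)$ with $R_y=\Ocal^h_{Y,y}\otimes_{\Ocal_{Y,y}}F$, and $R_y$ is again an integral regular essentially smooth $k$-algebra (it is the localization of $\Ocal^h_{Y,y}$ obtained by inverting $\Ocal_{Y,y}\setminus\{0\}$); since the maximal ideal of $\Ocal^h_{Y,y}$ contracts to the nonzero maximal ideal of $\Ocal_{Y,y}$ when $y\ne\eta$, every prime of $R_y$ has height $\le\operatorname{codim}(y,Y)-1\le d-1$, while $R_\eta=F$, so in all cases $\dim R_y<d$ and the inductive hypothesis gives $H^q_{\Nis}(\Spec R_y,M)=0$ for $q\ge1$. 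Combining the two facts with the Leray spectral sequence for $\iota$ gives $R\Gamma_{\Nis}(Y,M)\simeq R\Gamma_{\Nis}(\eta,\iota^*M)\simeq M(F)$ concentrated in degree $0$, completing the induction.

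I am suppressing two routine ingredients: the continuity of $M$ (and of Nisnevich cohomology) along the cofiltered limits of smooth $k$-schemes presenting the essentially smooth schemes above, which is what makes `birational' meaningful for them and pins down the stalks of $R^q\iota_*$; and the fact that the spectrum of a field is a point of the Nisnevich topos. The one genuinely delicate point is organizing the induction so that forming Henselian local rings and then generic fibres keeps us inside a fixed class of schemes while strictly lowering the dimension — that height computation for $R_y$ is exactly what lets the stalkwise vanishing of $R^q\iota_*\iota^*M$ be supplied by the inductive hypothesis, and it is the step I would double-check most carefully.
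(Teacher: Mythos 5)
Your proof is correct, but it establishes the key cohomological input by a genuinely different route from the paper. The paper's proof is essentially two citations: $\A$-invariance of $M$ is Lemma \ref{colliot-thelene}, and the vanishing of $H^n(-,M)$ for $n>0$ is quoted from Asok--Haesemeyer, whose argument is \v{C}ech-theoretic (birationality forces the \v{C}ech cohomology of every Nisnevich cover to vanish in positive degrees, and Cartan's criterion then identifies \v{C}ech with derived-functor cohomology). You instead prove the vanishing by d\'evissage through the generic point: $M\cong\iota_*\iota^*M$, the higher direct images $R^q\iota_*\iota^*M$ vanish because their Nisnevich stalks are the cohomology of $\Spec R_y$ with $R_y=\Ocal^h_{Y,y}\otimes_{\Ocal_{Y,y}}F$, and the Leray spectral sequence finishes. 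This is a legitimate, self-contained alternative modulo the continuity statements you flag, which are standard. One simplification: your induction on dimension is not actually needed, because $R_y$ is always a \emph{field}, namely the fraction field of $\Ocal^h_{Y,y}$ (any nonzero $f\in\Ocal^h_{Y,y}$ is algebraic over $F$, and multiplying the relation given by its minimal polynomial shows $f$ divides its nonzero constant term inside $R_y$); hence every stalk you need is the Nisnevich cohomology of a field and only your base case is ever invoked. The height bound you single out as the delicate point is correct but weaker than what is true, and the whole argument collapses to: $M\cong\iota_*\iota^*M$ with $\iota_*$ exact on the relevant sheaf, so $H^n(Y,M)\cong H^n(\Spec F,\iota^*M)=0$ for $n\geq 1$. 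What your approach buys is independence from the \v{C}ech-to-derived comparison; what the paper's buys is brevity, since it can simply cite the known lemma.
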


\begin{proof}
	Let $M$ be a birational sheaf of abelian groups.
	Then $M$ is $\A$-invariant by Lemma \ref{colliot-thelene}.
	Moreover we have $H^n(-,M)=0~(n>0)$ since the \v{C}ech cohomology groups vanish for $n>0$ by birationality (see \cite[Lemma 2.4]{AH11} for details).
	Therefore $M$ is strictly $\A$-invariant.
\end{proof}

\begin{theorem}\label{main}
	For any smooth proper variety $X$ over a field $k$, the canonical morphism $X\to \Zbb(\pi_0^\br(X))$ induces an isomorphism $H_0^\A(X)\cong \Zbb(\pi_0^\br(X))$.
	In particular, $H_0^\A(X)(k)$ is a free abelian group of rank $\#X(k)/R$.
\end{theorem}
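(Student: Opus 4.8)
The plan is to show that $\Zbb(\pi_0^\br(X))$ enjoys the same universal property as $H_0^\A(X)$, namely that it is initial among morphisms from $X$ to strictly $\A$-invariant sheaves of abelian groups (Lemma \ref{universal_property}). Once this is established, the Yoneda lemma inside $\Ab^\A(\Sm_k)$ produces a unique isomorphism $H_0^\A(X)\cong\Zbb(\pi_0^\br(X))$ compatible with the maps from $X$, and by uniqueness of the factorization in Lemma \ref{universal_property} this is exactly the morphism induced by the canonical map $X\to\Zbb(\pi_0^\br(X))$.

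\textbf{Step 1: $\Zbb(\pi_0^\br(X))$ is strictly $\A$-invariant.} By Lemma \ref{br_str_a1} it suffices to show that $\Zbb(\pi_0^\br(X))$ is a birational sheaf. Write $S=\pi_0^\br(X)$, so that $S$ is birational and in particular $S(U)=S(V)$ for every connected $U\in\Sm_k$ and every dense open $V\subset U$. I claim that the presheaf $Q\colon U\mapsto\prod_{\eta\in U^{(0)}}\Zbb[S(k(\eta))]$ is a Nisnevich sheaf isomorphic to $\Zbb(S)$. The key point is that any Nisnevich covering $\{U_i\to U\}$ of a connected $U$ admits a section over a dense open subscheme of $U$: the generic point of $U$ lifts to some $U_{i_0}$ with trivial residue field extension, and by flatness the component of $U_{i_0}$ containing that lift maps isomorphically onto a dense open of $U$. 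Granting this, one verifies the separation and gluing axioms for $Q$ directly, transporting elements along dense opens via the birationality of $S$. Since the natural map $\Zbb[S(-)]\to Q$ is an isomorphism on connected schemes, it induces isomorphisms on Nisnevich stalks, so $Q\cong\Zbb(S)$; in particular $\Zbb(\pi_0^\br(X))$ is a birational sheaf of abelian groups, hence strictly $\A$-invariant.

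\textbf{Step 2: comparison of universal properties, and the numerical statement.} Let $M$ be a strictly $\A$-invariant sheaf of abelian groups. Then $M$ is $\A$-invariant, and as a sheaf of groups it is strongly $\A$-invariant, hence unramified by Corollary \ref{a1_unr}. Combining the (sheaf-level) free--forgetful adjunction with Theorem \ref{pi0_universal} and Lemma \ref{universal_property}, we obtain natural bijections
\begin{align*}
	\Hom_{\Ab(\Sm_k)}(\Zbb(\pi_0^\br(X)),M)
	&\cong\Hom_{\Sh(\Sm_k)}(\pi_0^\br(X),M)\\
	&\cong\Hom_{\Sh(\Sm_k)}(X,M)\\
	&\cong\Hom_{\Ab(\Sm_k)}(H_0^\A(X),M),
\end{align*}
all compatible with precomposition by the structural morphisms out of $X$; since both $\Zbb(\pi_0^\br(X))$ and $H_0^\A(X)$ lie in $\Ab^\A(\Sm_k)$, this gives the desired isomorphism. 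Evaluating at $\Spec k$ and using $\pi_0^\br(X)(\Spec k)=X(k)/R$, we get $H_0^\A(X)(k)\cong\Zbb[X(k)/R]$, free abelian of rank $\#X(k)/R$.

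The formal backbone is Step 2, which is a routine comparison of representing objects; the one place that requires genuine work is Step 1, since the free-abelian-group functor does not commute with the products appearing in the definition of a birational sheaf, so $U\mapsto\Zbb[S(U)]$ is not literally a sheaf and one must identify its Nisnevich sheafification by hand. This identification, resting on the section-over-a-dense-open property of Nisnevich coverings of connected schemes, is the main obstacle.
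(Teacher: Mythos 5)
Your proposal is correct and follows essentially the same route as the paper: establish that $\Zbb(\pi_0^\br(X))$ is a strictly $\A$-invariant (birational) sheaf via Lemma \ref{br_str_a1}, then match the universal property of Lemma \ref{universal_property} against that of Theorem \ref{pi0_universal} combined with Corollary \ref{a1_unr} and the free--forgetful adjunction. Your Step 1 merely spells out a point the paper leaves implicit (that sheafification preserves birationality, which also follows quickly from the fact that birational presheaves are automatically sheaves, \cite[Lemma 6.1.2]{AM}).
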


\begin{proof}
	By Lemma \ref{br_str_a1} the sheaf $\Zbb(\pi_0^\br(X))$ is strictly $\A$-invariant.
	By Theorem \ref{pi0_universal} and Corollary \ref{a1_unr}, the canonical morphism $X\to \Zbb(\pi_0^\br(X))$ is initial among morphisms from $X$ to strictly $\A$-invariant sheaves of abelian groups.
	Hence by Lemma \ref{universal_property} we get the required isomorphism.
\end{proof}

Next we give another application of Theorem \ref{pi0_universal}.
Let $S_b$ denote the class of birational morphisms in $\Sm_k^\conn$.
Then the category of birational sheaves is equivalent to the category $\PSh(S_b^{-1}\Sm_k^\conn)$ of presheaves on $S_b^{-1}\Sm_k^\conn$.
For any smooth variety $X$ over $k$, define $h_X\in \PSh(S_b^{-1}\Sm_k^\conn)$ to be the presheaf represented by $X$.
Let $\tilde{h}_X$ denote the corresponding birational sheaf.
There is a canonical morphism $X\to \tilde{h}_X$.

\begin{lemma}\label{universal2}
	Let $X$ be a smooth variety over $k$.
	Then the canonical morphism $X\to \tilde{h}_X$ is initial among morphisms from $X$ to birational sheaves.
\end{lemma}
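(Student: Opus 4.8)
The plan is to deduce this purely formally, from two applications of the Yoneda lemma — one in $\Sh(\Sm_k)$ and one in the localized category $S_b^{-1}\Sm_k^\conn$ — together with the equivalence between birational sheaves and $\PSh(S_b^{-1}\Sm_k^\conn)$ recalled above. Once the relevant identifications are set up compatibly with the canonical morphism $X\to \tilde h_X$, there is nothing left to do.

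First I would reformulate the assertion. Since birational sheaves form a full subcategory of $\Sh(\Sm_k)$ and $\tilde h_X$ is birational, what has to be shown is that for every birational sheaf $S$ the restriction map
\[
\Hom_{\Sh(\Sm_k)}(\tilde h_X, S)\longrightarrow \Hom_{\Sh(\Sm_k)}(X, S)
\]
along the canonical morphism $X\to \tilde h_X$ is a bijection. Because the Nisnevich topology is subcanonical, the representable presheaf $X$ is a sheaf, so the Yoneda lemma identifies the right-hand side with $S(X)$, and under this identification the canonical morphism $X\to \tilde h_X$ corresponds to the image of $\id_X$ under the localization map $\Hom_{\Sm_k}(X,X)\to \Hom_{S_b^{-1}\Sm_k^\conn}(X,X)=\tilde h_X(X)$. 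For the left-hand side, let $\bar S\in \PSh(S_b^{-1}\Sm_k^\conn)$ be the presheaf corresponding to $S$, so that $\bar S(U)=S(U)$ for every $U\in \Sm_k^\conn$, in particular $\bar S(X)=S(X)$. Since $\tilde h_X$ corresponds to the representable presheaf $h_X$, the equivalence gives $\Hom_{\Sh(\Sm_k)}(\tilde h_X,S)=\Hom_{\PSh(S_b^{-1}\Sm_k^\conn)}(h_X,\bar S)$, and Yoneda on $S_b^{-1}\Sm_k^\conn$ identifies this with $\bar S(X)=S(X)$.

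It remains to chase $\id_X$ through these two chains of identifications and verify that the restriction map above is thereby identified with the identity of $S(X)$: for $\phi\colon \tilde h_X\to S$, both chains send $\phi$ to $\phi_X$ applied to the localization of $\id_X$, once because that element is the universal element of $h_X$, and once because it is the image of $\id_X\in X(X)$ under the canonical morphism. This is the only point that deserves any care — namely that the element of $h_X(X)$ classifying $X\to \tilde h_X$ is precisely the localized identity — and it has to be read off from the construction of the equivalence (birational sheaves)\,$\simeq\PSh(S_b^{-1}\Sm_k^\conn)$ rather than merely invoked. Beyond that the argument is entirely bookkeeping with the Yoneda lemma, so I expect no genuine obstacle.
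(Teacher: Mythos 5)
Your argument is the same as the paper's: two applications of the Yoneda lemma combined with the equivalence between birational sheaves and presheaves on $S_b^{-1}\Sm_k^\conn$, yielding the chain $\Hom_{\Sh(\Sm_k)}(X,S)\cong S(X)\cong \Hom_{\PSh(S_b^{-1}\Sm_k^\conn)}(h_X,\bar S)\cong \Hom_{\Sh(\Sm_k)}(\tilde h_X,S)$. You are slightly more careful than the paper in checking that the composite bijection is induced by restriction along $X\to\tilde h_X$, but the substance is identical.
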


\begin{proof}
	Let $X\to S$ be a morphism to a birational sheaf on $\Sm_k$.
	Write $S'$ for the presheaf on $S_b^{-1}\Sm_k^\conn$ corresponding to $S$.
	Then the claim follows from the sequence of natural bijections
	\begin{align*}
		\Hom_{\Sh(\Sm_k)}(X,S)			&\cong S(X) \cong S'(X)\\
										&\cong \Hom_{\PSh(S_b^{-1}\Sm_k^\conn)}(h_X, S')\\
										&\cong \Hom_{\Sh(\Sm_k)}(\tilde{h}_X,S)
	\end{align*}
	where the first and the third bijection is due to the Yoneda lemma.
\end{proof}

We obtain a simple proof of the following result of Kahn-Sujatha \cite[Theorem 6.6.3]{KS15}:

\begin{theorem}\label{ks15main}
	Let $X,U\in \Sm_k^\conn$ and suppose that $X$ is proper.
	Then there is a bijection
	$$
		\Hom_{S_b^{-1}\Sm_k^\conn}(U,X)\cong X(k(U))/R
	$$
	which is compatible with the canonical maps from $X(U)$ to both sides.
\end{theorem}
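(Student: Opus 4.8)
The plan is to reduce everything to the comparison $\tilde{h}_X\cong\pi_0^\br(X)$ of birational sheaves, and then read off the statement from the known description $\pi_0^\br(X)(U)\cong X(k(U))/R$.

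First I would unwind the definitions. Under the equivalence between birational sheaves and $\PSh(S_b^{-1}\Sm_k^\conn)$ recalled above, the birational sheaf $\tilde{h}_X$ corresponds to the representable presheaf $h_X$, so for every $U\in\Sm_k^\conn$ one has $\tilde{h}_X(U)=h_X(U)=\Hom_{S_b^{-1}\Sm_k^\conn}(U,X)$, and the canonical morphism $X\to\tilde{h}_X$ evaluated on $U$ is exactly the localization map $X(U)\to\Hom_{S_b^{-1}\Sm_k^\conn}(U,X)$. Hence it suffices to produce an isomorphism of sheaves $\tilde{h}_X\cong\pi_0^\br(X)$ compatible with the canonical morphisms from $X$: composing the resulting bijection $\tilde{h}_X(U)\cong\pi_0^\br(X)(U)$ with the identifications $\tilde{h}_X(U)=\Hom_{S_b^{-1}\Sm_k^\conn}(U,X)$ and $\pi_0^\br(X)(U)\cong X(k(U))/R$ (the latter already compatible with the map from $X(U)$) then gives the claim.

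To build that isomorphism I would play the two universal properties against each other. Every birational sheaf is $\A$-invariant by Lemma \ref{colliot-thelene} and unramified straight from the definitions (a connected scheme $Y$ has a single generic point, so $S(Y)\cong S(k(Y))$ and $S(\Ocal_{Y,y})\cong S(k(Y))$), so the class of birational sheaves lies inside the class of $\A$-invariant unramified sheaves. Since $X$ is proper, Theorem \ref{pi0_universal} applies: as $\tilde{h}_X$ is a birational, hence $\A$-invariant unramified, sheaf, there is a unique morphism $\pi_0^\br(X)\to\tilde{h}_X$ under $X$; conversely, Lemma \ref{universal2} provides a unique morphism $\tilde{h}_X\to\pi_0^\br(X)$ under $X$, because $\pi_0^\br(X)$ is a birational sheaf. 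The two composites are endomorphisms of $\tilde{h}_X$, resp. of $\pi_0^\br(X)$, under $X$, hence equal to the identity by the uniqueness clauses of Lemma \ref{universal2}, resp. Theorem \ref{pi0_universal}. This yields the desired $X$-compatible isomorphism $\tilde{h}_X\cong\pi_0^\br(X)$.

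The only substantive input is Theorem \ref{pi0_universal} — which is precisely where properness of $X$ enters — together with the $\A$-invariance of birational sheaves; the rest is bookkeeping. The point I would be most careful about is that all morphisms are taken \emph{under} $X$, so that the uniqueness parts of the universal properties genuinely force the composites to be identities, and, relatedly, that the equivalence quoted before Lemma \ref{universal2} really does identify $\tilde{h}_X(U)$ with $\Hom_{S_b^{-1}\Sm_k^\conn}(U,X)$ compatibly with the map from $X(U)$.
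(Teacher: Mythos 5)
Your argument is correct and follows the paper's proof exactly: both deduce from Theorem \ref{pi0_universal} and Lemma \ref{colliot-thelene} that $X\to\pi_0^\br(X)$ is initial among morphisms to birational sheaves, pit this against the universal property of $\tilde{h}_X$ from Lemma \ref{universal2} to obtain a unique $X$-compatible isomorphism $\tilde{h}_X\cong\pi_0^\br(X)$, and evaluate at $U$. You merely spell out more explicitly the standard uniqueness argument and the identification $\tilde{h}_X(U)=\Hom_{S_b^{-1}\Sm_k^\conn}(U,X)$, which the paper leaves implicit.
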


\begin{proof}
	By Theorem \ref{pi0_universal} and Lemma \ref{colliot-thelene}, the canonical morphism $X\to \pi_0^\br(X)$ is initial among morphisms from $X$ to birational sheaves.
	Hence by Lemma \ref{universal2} there is a unique isomorphism $\tilde{h}_X\cong\pi_0^\br(X)$ which is compatible with the canonincal morphisms from $X$ to both sides.
	Evaluating at $U\in \Sm_k^\conn$ we obtain the required bijection.
\end{proof}

\end{document}